\newtheorem{lemma}{Lemma}[section]
\newtheorem{theorem}{Theorem}[section]
\newtheorem{proposition}{Proposition}[section]
\theoremstyle{definition}
\newtheorem{definition}{Definition}[section]
\theoremstyle{remark}
\newtheorem{remark}{Remark}[section]
\numberwithin{equation}{section}
\newcommand{\p}{\partial}
\newcommand{\norm}[1]{\left\Vert#1\right\Vert}
\newcommand{\dd}{\mathrm{d}}
\newcommand{\di}{\mathrm{div}}
\newcommand{\grad}{\mathrm{grad}}
\newcommand \R{\mathbb{R}}
\newcommand \T{\mathbb{T}}
\newcommand \Z{\mathbb{Z}}
\newcommand{\Rmnum}[1]{\mathrm{\expandafter\@slowromancap\romannumeral#1@}}
\begin{document}
\title[Stabilization effect of frictions for transonic shocks]
{Stabilization effect of frictions for transonic shocks\\ in steady compressible Euler flows passing\\ three-dimensional ducts}

\author{Hairong Yuan}
\address{Hairong Yuan:
School of Mathematical Sciences, and Shanghai Key Laboratory of Pure Mathematics and Mathematical Practice,
East China Normal University, Shanghai 200241, China}
\email{hryuan@math.ecnu.edu.cn}

\author{Qin Zhao}
\address{Qin Zhao (Corresponding author): School of Mathematical Sciences, Shanghai Jiao Tong University,
	Shanghai 200240, China}
\email{zhao@sjtu.edu.cn}

\keywords{ Stability, Transonic shock, Fanno flow, Three-dimensional, Euler system, Friction,
Nonlocal elliptic problem, Venttsel boundary condition, Elliptic-hyperbolic mixed-composite type, Decomposition. }

\subjclass[2010]{35M32, 35Q31, 35R35, 76H05, 76L05, 76N10.}

\begin{abstract}
Transonic shocks play a pivotal role in designation of supersonic inlets and ramjets. For the three-dimensional steady non-isentropic compressible Euler system with frictions,  we had constructed a family of transonic shock solutions in rectilinear ducts with square cross-sections, and this paper is devoted to proving rigorously that a large class of these transonic shock solutions  are stable, under multidimensional small perturbations of the upcoming supersonic flows and back pressures at the exits of ducts in suitable function spaces. This manifests that friction has a stabilization effect on transonic shocks in ducts, in consideration of previous works have shown that transonic shocks in purely steady Euler flows are not stable in such ducts. Except its implications to applications, since frictions lead to a stronger coupling between the elliptic and hyperbolic parts of the three-dimensional steady subsonic Euler system, we develop the framework established in previous works to study more complex and interesting Venttsel problems of nonlocal elliptic equations.
\end{abstract}
\maketitle
\tableofcontents

\section{Introduction}\label{sec1}
This paper is a continuation of previous works \cite{Yuan2006,Yuan2007,ChenYuan2008,Yuan2008-2,LiuYuan2008,ChenYuan2013,
FangLiuYuan2013,LiuXuYuan2016} on a systematic investigation of existence, stability and uniqueness of transonic shocks (i.e. normal shocks) in steady compressible Euler flows. Starting from the work \cite{chen-feldman-2003} of Chen and Feldman, the study of steady transonic shocks has attracted many authors (see also  \cite{BaeFeldman2011,LiXinYin2013,XinYin2005} and references therein), not only due to its important applications to supersonic inlet and ramjet (see, for example, \cite[Chapters 3,11]{Ra2010}), but also the difficulties it involved in mathematics, such as free-boundary, nonlinear equations of elliptic-hyperbolic composite-mixed type. Interestingly, although transonic shocks could be observed in experiments in a seemingly rectilinear duct (see for example, photograph 225 in \cite{vandyke}), the previous mathematical studies have shown that transonic shock solutions to steady compressible Euler system in straight ducts are not stable under perturbations of the back pressures posed at the exits, or the up-stream supersonic flows (see, for instance, \cite{Yuan2006, ChenYuan2008,FangLiuYuan2013}).

To solve this paradox, many authors considered geometric effects \cite{Yuan2008-2}. For a ``non-isentropic potential flow model" proposed by Bae and Feldman, the stability of transonic shocks in divergent nozzles were  proved in \cite{BaeFeldman2011}. For the two-dimensional outward cylindrical full compressible Euler flows, the stability of transonic shocks was shown in \cite{LiuYuan2008}, where the authors discovered many interesting nonlocal elliptic problems coming from interactions of the elliptic part and hyperbolic part of the steady subsonic Euler system. The case of Euler flows in two-dimensional divergent nozzles  was proved in \cite{LiXinYin2013}. For the more difficult three-dimensional steady  compressible Euler  flows, in \cite{LiuXuYuan2016} the authors proved stability of outward spherical transonic shocks. These works demonstrate that geometry (expanding of area of the flow tube) has a stabilization effect on transonic shocks. It is natural to ask whether friction, which is considered as an important factor in engineering for nozzle flows,    has a stabilization effect on transonic shocks. If it is, then one partially solves the paradox mentioned above. This is exactly the purpose of this work.

The main difficulties of studying stationary transonic shocks  come from the facts that the full compressible Euler equations of steady subsonic flows consist a nonlinear system of conservation laws of elliptic-hyperbolic composite-mixed type, and the shock-front is a free-boundary. In \cite{LiuXuYuan2016}, the authors have established a framework to decompose the steady Euler system, as well as the Rankine-Hugoniot jump conditions,  that works in a general product Riemannian manifold. The Euler system is rewritten as four transport equations plus a second-order elliptic equation of pressure, for them the lower-order terms are coupled. In this paper,  although general tools from differential geometry are not necessary, frictions do lead to more stronger coupling of the elliptic parts and hyperbolic parts. So we need to treat more general Venttsel problems of nonlocal elliptic equations.

In a previous paper \cite{YuanZhao}, we have considered steady subsonic compressible Euler flows in a duct with frictions. In that work we have also shown existence of a family of special transonic shock solutions by considering fluid flows only in the axial direction of the ducts, and the frictions acting on the axial direction opposite to that of the flow. So for simplicity, we sometimes just borrow computations from \cite{YuanZhao}. However, to make the paper more readable, there are some necessary repetitions. As we mentioned in \cite{YuanZhao}, a single friction term changes drastically the behavior of solutions of Euler system, so although many expressions in this paper look quite similar to those appeared in \cite{LiuXuYuan2016}, there are some major differences in details, and we had carried out careful computations.

We remark that there are also many works on transonic shocks and frictional flows in the time-dependent case, but mainly on the so-called quasi-one-dimensional model, which is a hyperbolic system of balance laws, see for example, \cite{Liu1982-1,Liu1982-2,RauchXieXin2013,Tsuge2015,CHHQ2016} and references therein. If the friction force depend linearly on momentum, one may also consult \cite{HMP2005,HuangPanWang2011} and references therein for large time behavior of weak solutions.

In the rest of this section we formulate the transonic shock problem (T), and state the main result of this paper, namely Theorem \ref{thm201}. In Section \ref{sec3} we reformulate problem (T) by using a decomposition of the system \eqref{eq101}-\eqref{eq103} established in \cite{LiuXuYuan2016}. In Section \ref{sec4}, we study a crucial Venttsel boundary-value problem for a second-order nonlocal elliptic equation. In Section \ref{sec5}, by showing contraction of a nonlinear mapping, we prove Theorem \ref{thm201}.

\subsection{The transonic shock problem}\label{sec11}
We now formulate the transonic shock problem in a more technical way.
As in all of the previous works, we consider polytropic gases:
$$p=A(s)\rho^\gamma,$$
where $p$ is the scalar pressure, $\rho$ is the density of mass, $\gamma>1$ is the adiabatic exponent, and $s$ is the entropy per unit mass, with $A(s)=k_0\exp(s/c_\nu)$, and $k_0, c_v$ are positive constants.  The sonic speed is given by $c=\sqrt{\gamma p/\rho}$.

In the Descartesian coordinates $(x^0, x^1,x^2)$ of the Euclidean space $\R^3$, let $D=\{(x^0,x^1,x^2): x^0\in(0,L), (x^1,x^2)\in(0,\pi)\times(0,\pi)\}$ be a rectilinear duct with length $L$ and constant square cross-section, where the gas flows along the positive $x^0$-direction. Hence we call $D_0=\{(x^0,x^1,x^2): x^0=0, (x^1,x^2)\in(0,\pi)\times(0,\pi)\}$ and $D_L=\{(x^0,x^1,x^2): x^0=L, (x^1,x^2)\in(0,\pi)\times(0,\pi)\}$
respectively the entry and exit of the duct. To avoid technical difficulties arose by the lateral walls, as in \cite{chen-feldman-2003,ChenYuan2008,ChenXie2014,Weng2015}, by assuming the upstream supersonic flows and the back pressures have some symmetric properties with respect to the walls $[0,L]\times\p[0,\pi]^2$, we may suppose the flows are periodic in $x^1, x^2$-directions with periods $2\pi$. The details are presented in \cite[p.528, p.552]{ChenYuan2008}, so we omit them here.

Let $u=(u^0, u^1, u^2)^\top$ be the velocity of the gas flows. Recall that the flow is called {\it subsonic} at a point if the Mach number $M=|u|/c$ is less than $1$ there, and {\it supersonic} if $M>1$. Then as explained in \cite{YuanZhao}, the motion of compressible Euler flows with frictions is governed by the following equations ({\it cf.} \cite{CF1976, Dafermos2010, Shapiro1953}):
\begin{align}
\di (\rho u\otimes u)+\grad\, p-\rho \mathfrak{b}=&0,\label{eq101}\\
\di (\rho u)=&0,\label{eq102}\\
\di (\rho E u)-\rho \mathfrak{b}\cdot u=&0,\label{eq103}
\end{align}
where `$\di$' and `$\grad$' are respectively the standard divergence and gradient operator in $\mathbb{R}^3$; $E\triangleq\frac{1}{2}|u|^2+\frac{\gamma}{\gamma-1}\frac{p}{\rho}$  is the so-called  {\it Bernoulli constant},  and $\mathfrak{b}=(-\mu (u^0)^2,0,0)^\top$ represents the force of friction acting on per unit mass of gas with a positive constant $\mu$.  These equations are the conservation of momentum, mass and energy, respectively.

Let $\Omega=\{(x^0, x^1,x^2): x^0\in(0, L),x'=(x^1,x^2)\in\T^2\}$ be the duct we consider henceforth, where $\T^2=\mathbb{R}^2/(2\pi\mathbb{Z}^2)$ is the flat $2$-torus, with a coordinates $x'=(x^1,x^2), x^1,x^2\in[0,2\pi)$. Then $\p\Omega$,  the boundary of $\Omega$, is given by $\Sigma_0\cup \Sigma_1$, with $\Sigma_0=\{0\}\times \T^2$ and $\Sigma_1=\{L\}\times \T^2$. For $u=(u^0, u^1,u^2)^\top$, we call $u^0$ the {\it normal velocity} and $u'=(u^1,u^2)^\top$ the {\it tangential velocity}. We  use $U=({p}, {s},{E}, {u}')$ to represent the state of the gas in $\Omega$. Suppose that
\begin{eqnarray}\label{eq201}
S^\psi=\{(x^0, x')\in\Omega\,:\, x^0=\psi(x'), \ x'\in \T^2\}
\end{eqnarray}
is a surface, where $\psi: \T^2\to\Omega$ is a $C^1$ function. The normal vector field on $S^\psi$ is given by
\begin{eqnarray*}
n=(1, -\p_1\psi,-\p_2\psi).
\end{eqnarray*}
We also set $\Omega_\psi^-=\{x\in\Omega: x^0<\psi(x'), \ x'\in \T^2\}$ to be the {\it supersonic region}, and
$\Omega_\psi^+=\{x\in\Omega: x^0>\psi(x'), \ x'\in \T^2\}$ to be the {\it subsonic region}.

\begin{definition}[Transonic shock]
Let $\psi\in C^1(\T^2)$ and $U^\pm\in C^1(\Omega_\psi^\pm)\cap C(\overline{\Omega_\psi^\pm})$. We say that $U=(U^-, U^+; \psi)$ is a {\it transonic shock solution}, if
\begin{itemize}
\item[1)] $U^\pm$ solve the system \eqref{eq101}-\eqref{eq103} in $\Omega_\psi^\pm$ in the classical sense;
\item[2)] $U^-$ is supersonic, and $U^+$ is subsonic;
\item[3)] The following Rankine--Hugoniot jump conditions (R--H conditions) hold across $S^\psi$:
\begin{align}
\left.[\rho(u\cdot n)u +pn]\right.=0,& \label{eq203}\\
\left.[\rho (u\cdot n)]\right.=0,& \label{eq204}\\
\left.[\rho (u\cdot n)E]\right.=0,& \label{eq205}
\end{align}
where $u\cdot n$ is the standard inner product of the vectors $u,n\in\mathbb{R}^3$, and  $[f(U)]\triangleq f(U^+|_{S^\psi})-f(U^-|_{S^\psi})$ denotes the jump of a quantity $f(U)$ across $S^\psi$;
\item[4)] There holds the following physical entropy condition
\begin{eqnarray}\label{eq206}
[p]=p^+|_{S^\psi}-p^-|_{S^\psi}>0.
\end{eqnarray}
\end{itemize}
\end{definition}

By the definition we infer that a transonic shock solution is a weak entropy solution of the steady Euler system \eqref{eq101}-\eqref{eq103} ({\it cf.} Section 4.3 and Section 4.5 in \cite{Dafermos2010}).

To formulate the transonic shock problem, we need to specify boundary conditions, which are similar to the previous works.

Since the flow $U^-$ is supersonic near the entry $\Sigma_0$, we shall propose the following Cauchy data:
\begin{eqnarray}\label{eq207}
U=U_0^-(x') \qquad \text{on}\quad \Sigma_0.
\end{eqnarray}
Here we require that $(u^0)_0^->c^-_0$ to make sure the steady Euler system \eqref{eq101}-\eqref{eq103} is symmetric hyperbolic in the positive $x^0$-direction on $\Sigma_0$.

On the exit $\Sigma_1$, from considerations in engineering, as in the studies of subsonic flows, we require that
\begin{eqnarray}\label{eq208}
p=p_1(x')\qquad \text{on}\quad \Sigma_1,
\end{eqnarray}
where $p_1$ is a given function defined on $\mathbb{T}^2$.

\medskip
\fbox{
\parbox{0.90\textwidth}{
Problem (T): Find a transonic shock solution in $\Omega$ which satisfies the  boundary conditions \eqref{eq207} and \eqref{eq208} pointwisely.}}

\subsection{Main result}\label{sec12}
The existence of a family of special transonic shock solutions $U_b=(U_b^-, U_b^+; r_b)$ with the position $r_b$ of the shock (which are called as {\it background solutions} in the sequel) to Problem (T) has been established in \cite{YuanZhao}. Recall that the background solutions $U_b$,  which depend  only on $x^0$,  satisfy the following ordinary differential equations (see (2.2)--(2.4) in \cite{YuanZhao}):
\begin{eqnarray}\label{eqback}
\frac{\dd u^0_b}{\dd x^0}=\frac{\mu M_b^2}{1-M_b^2}u^0_b,\qquad
\frac{\dd p_b}{\dd x^0}=\frac{\mu\gamma M_b^2}{M_b^2-1}p_b,\qquad
\frac{\dd \rho_b}{\dd x^0}=\frac{\mu M_b^2}{M_b^2-1}\rho_b.
\end{eqnarray}

\begin{remark}\label{rem11new}
For given $L$ less than a maximal length, for which the flow at the exit is still subsonic, we note that  $U_b^+$  actually depends analytically on the parameters $\gamma>1, \mu\ge 0, r_b\in (0, L]$ and  $U_b^-(0)$.   In addition, it is important to note that the subsonic Fanno flow $U_b^+$ could be extended analytically to $[r_b-h_b, L]\times \T^2$ via these equations, for a small positive constant $h_b$ depending solely on these parameters.

We may also imagine that the exit $\{x^0=L\}$ lies on the  left-hand side of the shock-front,  namely,  $r_b>L$ but close to $L$:
The flow is supersonic on $0\le x^0\le r_b$, flows from left to right, and jump to subsonic at $r_b$, then flows to the left (for $x^0<r_b$), along the subsonic solution of the differential equations \eqref{eqback} with initial data $U_b^+(r_b)$ at $x^0=r_b$. Although such a flow pattern is obviously not possible in reality, it is mathematically justified, just like studying multi-valued analytic functions on Riemannian surfaces.
This fact will be used in the proof of Lemma \ref{lem401} later.
\end{remark}

In this paper we mainly concern existence of general transonic shock solutions that are obtained by multidimensional perturbations of background solutions. The main theorem is as follows.

\begin{theorem}\label{thm201}
Suppose that a background solution $U_b$  satisfies the S-Condition (see Remark \ref{rm11} below), and  $\alpha\in(0,1)$. There exist $\varepsilon_0$ and $C_*$ depending only on $U_b$ and $\gamma, \alpha, L$ such that, if the upcoming supersonic flow $U_0^-$ on $\Sigma_0$ and the back pressure $p_1$ on $\Sigma_1$ satisfy
\begin{eqnarray}
\norm{U_0^--U_b^-}_{C^{4}(\Sigma_0)}\le\varepsilon\le\varepsilon_0,&\label{eq209}\\
\norm{p_1-p_b^+}_{C^{3,\alpha}(\Sigma_1)}\le\varepsilon\le\varepsilon_0,&\label{eq210}
\end{eqnarray}
then there exists a transonic shock solution $U=(U^-,U^+; \psi)$ to Problem (T), so that  $\psi\in
C^{4,\alpha}({\T}^2)$, $U^-\in C^{4}(\overline{\Omega_\psi^-})$, $p^+\in C^{3,\alpha}(\overline{\Omega_\psi^+})$, $u^+, \rho^+, s^+\in C^{2,\alpha}(\overline{\Omega_\psi^+})$, $u^+|_{S^\psi}, \rho^+|_{S^\psi}, s^+|_{S^\psi}\in C^{3,\alpha}(\T^2)$, and
\begin{eqnarray}\label{eq211}
\norm{\psi-r_b}_{C^{4,\alpha}({\T}^2)}\le C_*\varepsilon,&\\
\label{eq212}
\norm{U^--U^-_b}_{C^4(\overline{\Omega_\psi^-})}\le C_*\varepsilon,&\\
\label{eq213}
\norm{\left.U^+\right|_{S^\psi}-\left.U_b^+\right|_{S^\psi}}_{C^{3,\alpha}(\T^2)}+\norm{U^+-U^+_b}_{3}\le C_*\varepsilon.&
\end{eqnarray}

Furthermore, such solution is unique in the class of functions $\psi, U^-, U^+$ with
\begin{eqnarray}\label{eq214}
\norm{\psi-r_b}_{C^{3,\alpha}({\T}^2)}\le C_*\varepsilon,&\\
\label{eq215}
\norm{U^--U^-_b}_{C^4(\overline{\Omega_\psi^-})}\le C_*\varepsilon,&\\
\label{eq216}
\norm{\left.U^+\right|_{S^\psi}-\left.U_b^+\right|_{S^\psi}}_{C^{2,\alpha}(\T^2)}
+\norm{U^+-U^+_b}_{2}\le C_*\varepsilon.&
\end{eqnarray}
Here, the norm $\norm{\cdot}_k$ $(k=2,3)$ is defined by
\begin{eqnarray}\label{norm}
\norm{U}_k\triangleq \norm{p}_{C^{k,\alpha}(\overline{\Omega_\psi^+})}+
\norm{s}_{C^{k-1,\alpha}(\overline{\Omega_\psi^+})}+
\norm{E}_{C^{k-1,\alpha}(\overline{\Omega_\psi^+})}+
\sum_{\beta=1}^2\norm{u^\beta}_{C^{k-1,\alpha}(\overline{\Omega_\psi^+})}.
\end{eqnarray}
\end{theorem}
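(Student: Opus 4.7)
The proof follows the free-boundary-to-fixed-boundary strategy employed in the references cited above, reformulated as a Banach fixed-point problem in the ball prescribed by the estimates \eqref{eq211}--\eqref{eq213}. First I would dispose of the supersonic part: since \eqref{eq101}--\eqref{eq103} is symmetric hyperbolic in the $x^0$-direction on the supersonic state, and the Cauchy datum \eqref{eq207} is a $C^4$ perturbation of $U_b^-(0)$, classical local existence theory for quasilinear symmetric hyperbolic systems (with periodicity in $x'$) produces a unique $U^-\in C^4([0,L]\times\T^2)$ satisfying \eqref{eq212}. As long as the prospective shock lies in a small $C^{1}$ neighbourhood of $\{x^0=r_b\}$, this $U^-$ is defined up to $\Sigma_1$ and independent of $\psi$, so the trace $U^-|_{S^\psi}$ is at our disposal; all remaining difficulty lies in constructing $(U^+,\psi)$.

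Given an iterate $(\tilde\psi,\tilde U^+)$ in the above ball, I would use the decomposition of \cite{LiuXuYuan2016} to rewrite the subsonic Euler system on $\Omega_{\tilde\psi}^+$ as four transport equations (for the entropy $s$, the Bernoulli function $E$, and the two tangential velocity components $u^1,u^2$) together with a second-order equation for the pressure $p$ that is uniformly elliptic at the background subsonic state. The friction $\mathfrak b$ contributes additional first-order $p$-dependent source terms to every transport equation and to the elliptic equation, which is precisely the ``stronger coupling'' mentioned in the introduction. Inverting the R--H relations \eqref{eq203}--\eqref{eq205} in terms of $[p]$ and $\nabla\psi$ (using that $[p]$ is bounded away from zero by \eqref{eq206}) supplies boundary values for $s,E$, and the tangential slip of $u$ on $S^\psi$ from $U^-|_{S^\psi}$; integrating the transport equations forward from $S^\psi$ to $\Sigma_1$ along the perturbed streamlines then expresses $(s,E,u^1,u^2)$ as functionals of $p,\psi$ and $U^-$, with the one-derivative loss already encoded in the definition \eqref{norm}.

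The core step, and what I expect to be the main obstacle, is the resulting nonlocal Venttsel problem for $p$ analysed in Section \ref{sec4}. Eliminating $\psi$ from one component of the R--H conditions (which expresses $\nabla\psi$ algebraically in terms of the traces of $p$ and $U^-$ on $S^\psi$) and substituting into the remaining momentum jump yields a boundary condition on $S^\psi$ of Venttsel type, i.e.\ containing tangential second derivatives of the trace of $p$; moreover, its lower-order coefficients depend nonlocally on $p|_{S^\psi}$ through the values of $s,E,u^1,u^2$ obtained by pulling back the transport equations to $S^\psi$. Together with the Dirichlet condition \eqref{eq208} on $\Sigma_1$ this defines the elliptic problem whose solvability and the Schauder estimate
\begin{equation*}
\norm{p|_{S^\psi}-p_b^+|_{S^\psi}}_{C^{3,\alpha}(\T^2)}+\norm{p-p_b^+}_{C^{3,\alpha}(\overline{\Omega_\psi^+})}\le C\varepsilon
\end{equation*}
I would establish in Section \ref{sec4}; the S-Condition on $U_b$ is used precisely to rule out resonant eigenvalues of the linearised Venttsel operator caused by the friction, so that a Fredholm-type argument yields a unique solution. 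Once $p$ is in hand, $\psi$ is recovered from the algebraic R--H relation and gains one derivative, giving $\psi\in C^{4,\alpha}(\T^2)$ with the bound \eqref{eq211}.

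These pieces assemble into a map $\mathcal T:(\tilde\psi,\tilde U^+)\mapsto(\psi,U^+)$ that preserves the ball of radius $C_*\varepsilon$ for $\varepsilon\le\varepsilon_0$ small. For contraction one applies the same linear machinery to the difference of two iterates: the linearised transport and Venttsel problems have coefficients that differ by $O(\varepsilon)$ from those of the background, which gives a contraction factor $O(\varepsilon)$ measured in the one-derivative-lower norms of \eqref{eq214}--\eqref{eq216}. A fixed point of $\mathcal T$ then solves Problem (T), and the same contraction estimate applied to any two solutions in the class \eqref{eq214}--\eqref{eq216} immediately yields uniqueness there.
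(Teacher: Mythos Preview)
Your proposal is correct and follows essentially the same strategy as the paper: flatten the free boundary, decompose the subsonic system into transport equations for $s,E,u'$ plus a second-order elliptic equation for $p$, reduce to a nonlocal Venttsel problem whose solvability is guaranteed by the S-Condition, and close by Banach contraction in the one-derivative-lower norms. Two small points where your sketch differs from the paper's mechanics: first, the nonlocal terms live in the elliptic \emph{equation} for $\hat p$ (they are the integral term $e_4(y^0)\int_{r_b}^{y^0}b(\mu,\tau)\hat p(\tau,y')\,\dd\tau$ and the trace term $e_5(y^0)i^*\hat p$ produced by substituting the solved $\hat E$ and $\widehat{A(s)}$ back), not in the Venttsel boundary condition itself; second, the $C^{4,\alpha}$ regularity of $\psi$ is not obtained from an algebraic R--H relation but by observing that the profile $\psi^p$ satisfies an elliptic equation $\Delta'\psi^p+\mu_7\psi^p=\cdots$ on $\T^2$ (see \eqref{eq507}), which gains two derivatives over the $C^{2,\alpha}$ right-hand side.
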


\begin{remark}\label{rm11}
The technical S-Condition is given by Definition \ref{def401} in Section \ref{sec4}. It is shown there that a large class of  background solutions $U_b$  satisfy the S-Condition.
\end{remark}

\begin{remark}
The existence and uniqueness of supersonic flow $U^-$ in $\Omega=(0,L)\times \T^2$ subjected to the initial data
$U_0^-$ satisfying \eqref{eq209} follow from the theory of semi-global classical solutions of the Cauchy problem of
quasi-linear symmetric hyperbolic systems if $\varepsilon_0$ is sufficiently small (depending on $L$, {\it cf.} \cite{BS2007}). Furthermore, there exist $C_0>0$ and $\varepsilon_0>0$  depending solely on $U_b^-(0)$ and $L$, such that
\begin{eqnarray}\label{eq217}
\norm{U^--U^-_b}_{C^{4}(\overline{\Omega})}\le C_0\varepsilon,
\end{eqnarray}
which implies \eqref{eq212}. So Problem (T) is indeed a one-phase free-boundary problem, for which the free-boundary ({\it i.e.} the shock-front) $S^\psi$ and the subsonic flow $U^+$ are to be solved. {\it For simplicity, from now on we write $U^+$ as $U$.}
\end{remark}

\section{Reformulation of Problem (T)}\label{sec3}
The following is an important theorem established in  \cite[p.703]{LiuXuYuan2016}), which is rewritten for use to our case, namely, $\Omega_\psi^+$ is now a flat manifold. So for a vector field $u$, we have $D_u =u\cdot\mathrm{grad}$. As a convention, repeated Roman indices will be summed up for $0,1,2$, while repeated Greek indices are to be summed over for $1,2$, except otherwise stated.
\begin{proposition}\label{prop31}
Suppose that $p \in C^2(\Omega_\psi^+)\cap C^1(\overline{\Omega_\psi^+})$, $\rho, u\in C^1(\overline{\Omega_\psi^+})$, and $\rho>0, u^0\ne0$ in $\overline{\Omega_\psi^+}$. Then  $p, \rho, u$ solve the system \eqref{eq101}-\eqref{eq103} in $\Omega_\psi^+$ if and only if they satisfy the following equations in $\Omega_\psi^+$:
\begin{align}
&D_uE-\mathfrak{b}\cdot u=0,\label{eq301}\\
&D_uA(s)=0,\label{eq302}\\
&D_u\left(\frac{1}{\gamma p}D_u p\right)-\di \left(\frac{1}{\rho}\grad\,
p\right)- \p_ju^k\p_ku^j+\di\,\mathfrak{b}\nonumber \\
&\qquad+L^0(\frac{1}{\gamma p}D_up+\di\,u)+L^1(D_uE-\mathfrak{b}\cdot u)\nonumber\\
&\qquad\qquad+L^2(D_uA(s))
+L^3(D_uu+\frac{1}{\rho}\grad\,p-\mathfrak{b})=0,\label{eq303}\\
&D_u u^\beta+\frac{1}{\rho}\p_\beta\, p=0,\quad\beta=1,2;\label{eq304}
\end{align}
and the boundary condition on $S^\psi$:
\begin{eqnarray}
\frac{1}{\gamma p}D_u p+\di\,u+L_1(D_uE-\mathfrak{b}\cdot u)+L_2(D_uA(s))
+L_3(D_uu+\frac{1}{\rho}\grad\,p-\mathfrak{b})=0.\label{eq305}
\end{eqnarray}
Here $L^0(\cdot)$ is a linear function, and  $L^k(\cdot)$, $L_k(\cdot)$ are smooth functions so that $L^k(0)=0, L_k(0)=0$ for $k=1,2,3$.
\end{proposition}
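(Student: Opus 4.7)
The plan is to follow the decomposition strategy established in \cite{LiuXuYuan2016}, adapted to the present setting where the friction source $\mathfrak{b}$ introduces extra coupling between the elliptic and hyperbolic blocks. The forward direction consists of classical manipulations. Expanding the energy equation \eqref{eq103} with the Leibniz rule and using mass conservation \eqref{eq102} gives $\rho D_uE=\rho\mathfrak{b}\cdot u$, which is \eqref{eq301}. Combining this with the equation of state $p=A(s)\rho^\gamma$, the definition $E=\tfrac12|u|^2+\tfrac{\gamma}{\gamma-1}p/\rho$, and the inner product of $u$ with \eqref{eq101}, together with mass conservation, yields $D_uA(s)=0$, that is \eqref{eq302}. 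Projecting \eqref{eq101} onto the tangential directions $\partial_{x^1},\partial_{x^2}$, where $\mathfrak{b}^\beta=0$, and eliminating $u^\beta\,\mathrm{div}(\rho u)$ via \eqref{eq102}, produces \eqref{eq304}.

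For the second-order equation, the starting point is the identity $\tfrac{1}{\gamma p}D_up+\mathrm{div}\,u=0$, which is mass conservation rewritten using the equation of state and \eqref{eq302}. Applying $D_u$ to this relation and expanding the divergence of the momentum equation via $\partial_j(u^k\partial_k u^j)=\partial_j u^k\,\partial_k u^j+D_u(\mathrm{div}\,u)$ recovers the principal-order part of \eqref{eq303}. The remainders generated along the way are all of the form ``a smooth multiple of the defect of \eqref{eq301}, \eqref{eq302}, or \eqref{eq304}''; one packages them into smooth functions $L^1,L^2,L^3$ with $L^k(0)=0$, while the linear function $L^0$ absorbs the contributions proportional to the mass-conservation defect $\tfrac{1}{\gamma p}D_up+\mathrm{div}\,u$ itself. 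The boundary relation \eqref{eq305} is just the restriction of this same mass-defect expression to $S^\psi$, with analogous smooth functions $L_1,L_2,L_3$ built on the same principle.

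For the reverse direction, assume \eqref{eq301}--\eqref{eq305} and set $\Phi\triangleq\tfrac{1}{\gamma p}D_up+\mathrm{div}\,u$. The design of the $L^k$-terms ensures that \eqref{eq303} reduces to a first-order linear transport equation for $\Phi$ along the integral curves of $u$, while \eqref{eq305} supplies zero data on the non-characteristic inflow boundary $S^\psi$; uniqueness along streamlines forces $\Phi\equiv0$, which together with \eqref{eq302} and the equation of state is equivalent to $\mathrm{div}(\rho u)=0$, i.e.\ \eqref{eq102}. The energy equation \eqref{eq103} then follows from \eqref{eq301} and mass conservation. The tangential components of \eqref{eq101} coincide with \eqref{eq304}, and the normal component is recovered by running in reverse the derivation of \eqref{eq303}, now using $\Phi\equiv0$ together with the already-established \eqref{eq102} and \eqref{eq304}.

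The main obstacle compared with \cite{LiuXuYuan2016} is the systematic bookkeeping of the friction contributions: the term $\mathrm{div}\,\mathfrak{b}$ in \eqref{eq303} and the slots $-\mathfrak{b}$ inside the $L^3$ and $L_3$ arguments of \eqref{eq303} and \eqref{eq305} must be propagated through every identity, and the functions $L^k,L_k$ must be redefined so that the cancellations continue to close at the nonlinear level. Since $\mathfrak{b}=(-\mu(u^0)^2,0,0)$ is smooth in $u$ and $u^0$ stays uniformly bounded away from zero on $\overline{\Omega_\psi^+}$, these modifications are conceptually routine but algebraically involved, and the stated smoothness together with $L^k(0)=0$, $L_k(0)=0$ is preserved by construction.
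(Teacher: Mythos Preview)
The paper does not supply its own proof of this proposition; it simply records it as ``an important theorem established in \cite[p.703]{LiuXuYuan2016}, which is rewritten for use to our case.'' Your sketch follows exactly that decomposition strategy, and the forward and reverse arguments you outline are the correct ones. One small point on the reverse direction: the cleanest order is to observe first that the normal component of momentum is an \emph{algebraic} consequence of \eqref{eq301}, \eqref{eq302}, and \eqref{eq304} (write $u^0$ times the $0$-momentum defect and expand via $E$ and $A(s)$), so that the full vector $D_uu+\tfrac{1}{\rho}\nabla p-\mathfrak{b}$ vanishes and hence all arguments of $L^1,L^2,L^3$ are zero \emph{before} you reduce \eqref{eq303} to $D_u\Phi+L^0(\Phi)=0$; your step ``recover the normal component by running \eqref{eq303} in reverse'' is then unnecessary.
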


To formulate a tractable nonlinear Problem (T1) which is equivalent to Problem (T), we need to compute the exact expressions of \eqref{eq303} and \eqref{eq305} in Proposition \ref{prop31}. Some of the details of the computations could be found in \cite{YuanZhao}.

\subsection{The equation of pressure}\label{sec21}
We report that \eqref{eq303} is equivalent to the following second-order equation of pressure:
\begin{align}\label{eq306}
N(U)&\triangleq\Big(2E-\frac{\gamma+1}{\gamma-1}c^2\Big)\p_0^2p-c^2(\p_1^2p+\p_2^2p)
-2\mu(E-\frac{ c^2}{\gamma-1})\p_0p\nonumber\\
&\quad\quad-\frac{2}{ p}\left(E-\frac{c^2}{\gamma-1}+\frac{c^4}{4\gamma}\frac{1}{E-\frac{c^2}{\gamma-1}}\right)(\p_0p)^2
+2\mu^2\gamma p\left(E-\frac{c^2}{\gamma-1}\right)\nonumber\\
&=F_3\triangleq-\gamma p(F_1+F_2),
\end{align}
where
\begin{eqnarray}\label{eq307}
F_1=\sum_{(k,j)\ne(0,0)}\left(\frac{1}{\gamma p}u^k(u^j\p_{jk}p
+\p_ku^j\p_j p-\frac{1}{p}u^j\p_jp\p_kp)-\p_ju^k\p_ku^j
+\frac{1}{\rho^2}\delta_{kj}\p_k\rho\p_j p\right);
\end{eqnarray}
$\delta_{kj}$ is the standard Kronecker delta, and
\begin{align}\label{eq308}
F_2&=-\left((u^1)^2+(u^2)^2\right)
	\left\{\frac{1}{\gamma p}\p_0^2p
+\frac{(\p_0p)^2}{\gamma p^2}\left(-1+\frac{c^4}{\gamma}
\frac{1}{2E-\frac{2c^2}{\gamma-1}}\frac{1}{2E-	 \left((u^1)^2+(u^2)^2\right)-\frac{2c^2}{\gamma-1}}\right)\right\}\nonumber\\
&\qquad+\frac{1}{\gamma p}\left\{\left(\mu\big((u^1)^2+(u^2)^2\big) -u^\beta\p_{\beta}u^0\right)\p_0p+\frac{1}{u^0}\rho^{\gamma-1}u^\beta\p_\beta A(s)\p_0p\right\}\nonumber\\
&\qquad\qquad-\frac{1}{(u^0)^2}\left(u^\beta\p_\beta u^0+\frac{2}{\rho}\p_0p\right)u^\beta\p_\beta u^0-\mu^2\Big((u^1)^2+(u^2)^2\Big).
\end{align}

\subsection{The boundary conditions}\label{sec22}
The expression \eqref{eq305} is a nonlinear condition for pressure:
\begin{eqnarray}\label{eq309}
\p_0p -\mu\gamma p\frac{ (u^0)^2}{(u^0)^2-c^2}=G_1+G_2,
\end{eqnarray}
with
\begin{align}\label{eq310}
G_1\triangleq&-\frac{1}{\left(\frac{u^0}{c}\right)^2-1}\rho u^0\p_\beta u^\beta,\\
G_2\triangleq&-\frac{1}{\left(\frac{u^0}{c}\right)^2-1}\left\{{u^0}\left(\frac{1}{c^2}+\frac{1}{(u^0)^2}
\right)u^\beta\p_\beta p\right.\nonumber\\
&\qquad\left.+\frac{\rho}{u^0}u^\beta \left(\frac{1}{\gamma-1}\rho^{\gamma-1}\p_\beta A(s)+u^\sigma \p_\sigma u^\beta-\p_\beta E\right)\right\}.\label{eq311}
\end{align}

\subsection{Decomposition of R-H conditions}\label{sec23}
By the definition of shock-front, the mass flux $m\triangleq\rho (u\cdot n)|_{S^\psi}=\rho^- (u^-\cdot n)|_{S^\psi}\ne0$ (otherwise it is called as a contact discontinuity). So from \eqref{eq204} and \eqref{eq205}, we infer that $E|_{S^\psi}=E^-|_{S^\psi}$, while
\eqref{eq204} and \eqref{eq205} may be written as
\begin{eqnarray}
[m]=0,\quad [E]=0.\label{eq312}
\end{eqnarray}
The conservation of momentum  \eqref{eq203} shall be decomposed as
\begin{align}
\left.[mu^0+p]\right.&=0,\label{eq313}\\
\left.[m{u}^\beta-p\,\p_\beta\psi]\right.&=0,\qquad \beta=1,2.\label{eq314}
\end{align}

If $[p]>0$ (which is guaranteed by the physical entropy condition satisfied by the background solution, and the small perturbation estimate \eqref{eq213} to be established),  from \eqref{eq314} we  solve that
\begin{eqnarray}\label{eq315}
\p_\beta\psi=\left.\frac{m[{u}^\beta]}{[p]}\right|_{S^\psi}
=\mu_0({u}^\beta|_{S^\psi})+g_0^\beta,\qquad \beta=1,2,
\end{eqnarray}
with
\begin{align*}
\mu_0&=\left.\frac{(\rho u^0)_b}{p_b^+-p_b^-}\right|_{x^0=r_b}>0,\\
g_0^\beta&=g_0^\beta(U,U^-, D\psi)\triangleq\left.\frac{m[{u}^\beta]}{[p]}\right|_{S^\psi}-
\mu_0({u}^\beta|_{S^\psi}).
\end{align*}
We note that $g_0^\beta$ is a higher-order term (see Definition \ref{def302} below), which depends on $U|_{S^\psi}, U^-|_{S^\psi}$ and $D\psi$.

Thus, the R-H conditions \eqref{eq203}-\eqref{eq205} are equivalent to \eqref{eq312}, \eqref{eq313} and \eqref{eq315}, if $[p]\ne0$.

\begin{remark}\label{rem301}
By \eqref{eq315}, it is necessary that $\p_2\p_1\psi-\p_1\p_2\psi=0$, which implies
\begin{align}\label{eq318}
\p_2({u}^1|_{S^\psi})-\p_1({u}^2|_{S^\psi})=&-\frac{1}{\mu_0}(\p_2g_0^1-\p_1g_0^2).
\end{align}
Since $\psi$ is well-defined on $\T^2$ , there shall hold
\begin{eqnarray}
\int_{0}^{2\pi}\Big(\mu_0({u}^1|_{S^\psi})+g_0^1\Big)(\psi(s,\pi),s,\pi)\,\dd s=0,\label{eq319}\\
\int_{0}^{2\pi}\Big(\mu_0({u}^2|_{S^\psi})+g_0^2\Big)(\psi(\pi,s),\pi,s)\,\dd s=0.\label{eq320}
\end{eqnarray}
On the contrary, since the first Betti number of $\T^2$ is $2$, by de Rham's Theorem and Hodge Theorem, \eqref{eq318}-\eqref{eq320} are also sufficient for the existence of a unique function $\psi^p$ on $\T^2$ so that \eqref{eq315} holds, and $\int_{\T^2} \psi^p\dd x^1\dd x^2=0$. The function $\psi^p$ is called the {\it profile} of the surface $S^\psi$ defined by \eqref{eq201}, and the constant $r^p\triangleq\psi-\psi^p$ is called the {\it position} of $S^\psi$. As known from previous work, and will be illustrated in this paper, $\psi^p$ is determined by R-H conditions, while $r^p$ is determined by an integral-type solvability condition derived from the Euler equations. We note that conditions like \eqref{eq319} and \eqref{eq320} do not appear in the cases of spherical symmetric flows considered in \cite[p.730]{LiuXuYuan2016}, which exhibit the significant influences of topology in the studies of transonic shock problems.
\end{remark}

\subsection{Problem (T1)}
In this subsection, we separate the linear parts from the nonlinear equations \eqref{eq301}, \eqref{eq306}, \eqref{eq309},  the R--H conditions \eqref{eq312}, \eqref{eq313}, and write them in the form $$\mathcal{L}(U-U_b^+,\psi-r_b)=\mathcal{N}(U^--U_b^-, U-U_b^+, \psi),$$ where $\mathcal{L}$ is a linear operator, and $\mathcal{N}(U^--U_b^-, U-U_b^+, \psi)$ consist of certain higher-order terms defined below.
\begin{definition}\label{def302}
Let $\hat{U}=U-U_b^+.$ A higher-order term is an expression
that contains either

(i) $U^--U_b^-$ and its first-order derivatives;

\noindent
or

(ii) the products of $\psi^p,\ r^p-r_b, \hat{U}$, and their
derivatives $D\hat{U},D^2\hat{U}, D\psi,D^2\psi$, and $D^3\psi$,
where $D^k$ is a $k^{th}$-order derivative operator.
\end{definition}

\subsubsection{Linearization of Bernoulli law}\label{sec241}
After straightforward calculations,  \eqref{eq301} is equivalent to
\begin{align}\label{eq322}
D_u \hat{E}+2\mu u^0 \hat{E}=&\frac{2\mu u^0 }{\gamma-1}\rho_{b}^{\gamma-1}\widehat{A(s)}+\frac{2\mu u^0}{\rho_b} \hat{p}+H,
\end{align}
where
\begin{eqnarray}\label{eq323}
H=\mu u^0\Big((u^1)^2+(u^2)^2+O(1)\frac{2}{\gamma-1}(|\hat{p}|^2+|\widehat{A(s)}|^2)\Big),
\end{eqnarray}
and $O(1)$ represents a bounded quantity depending only on the background solution.

\subsubsection{Linearization of pressure's equation}\label{sec242}
By setting $t={u_b^2}/{c_b^2}=M_b^2\in (0,1),$
direct computation yields that \eqref{eq306} can be written as
\begin{align}\label{eq324}
\mathcal{L}(\hat{p})\triangleq&(t-1)\p_0^2\hat{p}-\p_1^2\hat{p}-\p_2^2\hat{p}+\mu
d_1(t)\p_0\hat{p}+\mu^2d_2(t)\hat{p}+\mu^2\rho_bd_3(t)\hat{E}
+\mu^2\rho_b^\gamma d_4(t)\widehat{A(s)}\nonumber\\
=&F_5\triangleq \frac{1}{c_b^2}(F_3+F_4).
\end{align}
We easily see that \eqref{eq324} is an elliptic equation of (perturbed) pressure for subsonic flow. The coefficients in \eqref{eq324} are given by
\begin{align*}
d_1(t)\triangleq&
-\frac{1}{t-1}\left((1+2\gamma)t^2+t-2\right),\\
d_2(t)\triangleq&\frac{1}{(t-1)^3}\left(\gamma(1+\gamma)t^4-
2\gamma(1+\gamma)t^3-(\gamma-3)t^2-2(4+\gamma)t+8\right), \\
d_3(t)\triangleq&\frac{1}{(t-1)^3}\Big(\gamma t^2+3t-4\Big),\\
d_4(t)\triangleq&-\frac{1}{\gamma-1}\frac{1}{(t-1)^3}\left(\gamma(\gamma-1)t^3+(5\gamma-3)
t^2-2(\gamma-4)t-8\right),
\end{align*}
and
\begin{align}\label{eq329}
-F_4
=&\left(2\hat{E}-\frac{\gamma+1}{\gamma-1}(c^2-c_b^2)\right)\p_0^2\hat{p}
-(c^2-c_b^2)(\p_1^2\hat{p}+\p_2^2\hat{p})
+2\mu^2\gamma\hat{p}\left(\hat{E}-\frac{1}{\gamma-1}
(c^2-c_b^2)\right)\nonumber\\
&\quad-2\mu\p_0\hat{p}\left(\hat{E}-\frac{1}{\gamma-1}(c^2-c_b^2)\right)
-\p_0(p+p_b)\p_0\hat{p}\left(\frac{|u|^2}{p}-\frac{u_b^2}{p_b}\right)
-\frac{u_b^2}{p_b}(\p_0\hat{p})^2\nonumber\\
&\quad\quad+(\p_0p_b)^2\frac{\hat{p}}{p_b}\left(\frac{|u|^2}{p}-\frac{u_b^2}{p_b}\right)
-\frac{\p_0(p+p_b)\p_0\hat{p}}{\gamma}\left(\frac{c^4}{p|u|^2}-\frac{c_b^4}{p_bu_b^2}\right)
-\frac{c_b^4}{\gamma p_bu_b^2}(\p_0\hat{p})^2\nonumber\\
&\quad\quad\quad-\frac{(\p_0p_b)^2}{\gamma}\left(\frac{1}{|u|^2}-\frac{1}{u_b^2}\right)
\left\{\left(\frac{c^4}{p}-\frac{c_b^4}{p_b}\right)+\frac{2c_b^4}{p_bu_b^2}
\left(\frac{c^2-c_b^2}{\gamma-1}-\hat{E}\right)\right\}\nonumber\\
&\quad\quad\quad\quad-\frac{(\p_0p_b)^2}{\gamma u_b^2}\left\{\left(\frac{c^2+c_b^2}{p}-\frac{2c_b^2}{p_b}\right)(c^2-c_b^2)
-\frac{c_b^4}{p_b}\hat{p}\left(\frac{1}{p}-\frac{1}{p_b}\right)\right\}
\nonumber\\
&\quad\quad\quad\quad\quad\quad-O(1)\Big(|\hat{p}|^2+|\widehat{A(s)}|^2\Big)
\left\{\frac{\gamma+1}{\gamma-1}\p_0^2p_b
+\frac{2\mu^2\gamma}{\gamma-1}p_b-\frac{2\mu}{\gamma-1}\p_0p_b\right.\nonumber\\
&\quad\quad\quad\quad\quad\quad\quad\left.-\frac{2}{\gamma-1}\frac{(\p_0p_b)^2}{p_b}
+\frac{2c_b^2}{\gamma p_b u_b^2}(\p_0p_b)^2\left(1+\frac{1}{\gamma-1}\frac{c_b^2}{u_b^2}\right)\right\}.
\end{align}

\subsubsection{Linearization of boundary condition}\label{sec243}
Note that \eqref{eq309} is equivalent to
\begin{eqnarray}\label{eq330}
\p_0(p-p_b)-\mu\gamma\left(\frac{ p
	(u^0)^2}{(u^0)^2-c^2}-\frac{p_b
	(u_b)^2}{(u_b)^2-c_b^2}\right)=G_1+G_2,
\end{eqnarray}
which could  be written as
\begin{eqnarray}\label{eq331}
\p_0\hat{p}+\gamma_0\hat{p}=G\triangleq G_1+G_2+G_3,
\end{eqnarray}
with $\gamma_0$  determined by the background solution:
\begin{eqnarray}\label{eq332}
\gamma_0\triangleq-\mu\frac{\gamma t^2-t+2}{(1-t)^2}<0,
\end{eqnarray}
and
\begin{align}\label{eq333}
G_3&=\mu\gamma\left\{\left(\frac{(u^0)^2}{(u^0)^2-c^2}-\frac{u_b^2}{u_b^2-c_b^2}\right)\hat{p}
+p_b\frac{u_b^2(c^2-c_b^2)-c_b^2((u^0)^2-u_b^2)}{u_b^2-c_b^2}\right.\nonumber\\
&\qquad\left.\times\left(\frac{1}{(u^0)^2-c^2}-\frac{1}{u_b^2-c_b^2}\right)
-\frac{p_bc_b^2}{(u_b^2-c_b^2)^2}\Big(2\hat{E}-
(u^1)^2-(u^2)^2\Big)\right.\nonumber\\
&\qquad\qquad\left.+\frac{2p_bE_b}{(u_b^2-c_b^2)^2}\left(O(1)(|\hat{p}|^2+|\widehat{A(s)}|^2)
+\rho_b^{\gamma-1}\widehat{A(s)}\right)\right\}.
\end{align}

\subsubsection{Linearization of R-H conditions}\label{sec244}
Next, we linearize the R--H conditions \eqref{eq312} and \eqref{eq313}. Let $V=(u^0,p,\rho)^\top,$ and  $V^-=((u^0)^-,p^-,\rho^-)^\top.$ Then we write them  equivalently as
\begin{eqnarray}\label{eq335}
\mathcal{G}_i(V, V^-)=\Psi_i(U, U^-, D\psi),\qquad i=1,2,3,
\end{eqnarray}
with
\begin{align*}
& \mathcal{G}_1=[ \rho(u^0)^2+p], & \Psi_1&=\p_{1}\psi[ \rho u^0 u^1]+\p_{2}\psi[ \rho u^0 u^2],\\
& \mathcal{G}_2=[ \rho u^0], & \Psi_2&=\p_{1}\psi[\rho u^1]+\p_{2}\psi[\rho u^2],\\
& \mathcal{G}_3=[ E], & \Psi_3&=0.
\end{align*}

As in \cite[p.2522]{ChenYuan2013}, using the fact that $$\mathcal{G}_i(V_b^+(r_b,x'),V_b^-(r_b,x'))=0,\quad i=1,2,3,$$ for $V_b^+=((u^0)_b^+,p_b^+,\rho_b^+)^\top$ and  $V_b^-=((u^0)_b^-,p_b^-,\rho_b^-)^\top,$  we have
\begin{align}\label{eq337}
&\p_+\mathcal{G}_i(V_b^+(r_b,x'),V_b^-(r_b,x'))\bullet\Big(V(\psi(x'),x')
-V_b^+(\psi(x'),x')\Big)\\
&=\Big\{-\Big(\p_+\mathcal{G}_i(V_b^+(\psi(x'),x'),V_b^-(\psi(x'),x'))
-\p_+\mathcal{G}_i(V_b^+(r_b,x'),V_b^-(r_b,x'))\Big)\nonumber\\
&\quad\bullet\Big(V(\psi(x'),x')-V_b^+(\psi(x'),x')\Big)\nonumber\\
&\quad+\,\,\p_+\mathcal{G}_i(V_b^+(\psi(x'),x'),V_b^-(\psi(x'),x'))\bullet\Big(V(\psi(x'),
x')-V_b^+(\psi(x'),x')\Big)\nonumber\\
&\quad-\Big(\mathcal{G}_i(V(\psi(x'),x'),V^-(\psi(x'),x'))-\mathcal{G}_i(V_b^+(\psi(x'),x'),
V^-(\psi(x'),x'))\Big)\nonumber\\
&\quad-\Big(\mathcal{G}_i(V_b^+(\psi(x'),x'),V^-(\psi(x'),x'))-\mathcal{G}_i(V_b^+(\psi(x'),
x'),V_b^-(\psi(x'),x'))\Big) +\Psi_i\Big\}\nonumber\\
&\quad-\Big\{\mathcal{G}_i(V_b^+(\psi(x'),x'),V_b^-(\psi(x'),x'))-\mathcal{G}_i(V_b^+(r_b,x'),
V_b^-(r_b,x'))\Big\}\nonumber\\
&\triangleq\Rmnum{1}_i + \Rmnum{2}_i,\nonumber
\end{align}
where ``$\bullet$" denotes the scalar product of vectors in the phase (Euclidean) space $\mathbb{R}^3$, and $\p_+\mathcal{G}_i(V,V^-)$ (respectively $\p_-\mathcal{G}_i(V,V^-)$) is the gradient of $\mathcal{G}_i(V,V^-)$ with respect to the variables $V$ (respectively $V^-$).

By the Taylor expansion formula, all the five terms in ${\Rmnum{1}_i}$ are of higher-order for $i=1,2,3$. While it is not the case in
\begin{align*}
\Rmnum{2}_i=&\p_+\mathcal{G}_i(V_b^+(r_b,x'),V_b^-(r_b,x'))\frac{\dd V_b^+}{\dd x^0}(r_b)(\psi^p
+r^p-r_b)\\
&\quad+\,\p_-\mathcal{G}_i(V_b^+(r_b,x'),V_b^-(r_b,x'))\frac{\dd V_b^-}{\dd x^0}(r_b)
(\psi^p+r^p-r_b)+O(1)|\psi-r_b|^2
\end{align*}
for $i=1,3.$ We remark that this is quite essential for stabilization of shocks. Actually, using \eqref{eqback} and the result
\begin{align*}
&\det\Big(\left.\frac{\p(\mathcal{G}_1,\mathcal{G}_2,\mathcal{G}_3)}
{\p(u^0,p,\rho)}\Big)\right|_{{(V_b^-,V_b^+; r_b)}}\nonumber\\
=&\det \left.\left(
  \begin{array}{ccc}
    2\rho_b^+ (u^0)_b^+ & 1 & ((u^0)_b^+)^2 \\
    \rho_b^+ & 0 & (u^0)_b^+ \\
    (u^0)_b^+ & \frac{\gamma}{\gamma-1}\frac{1}{\rho_b^+}
    & -\frac{1}{\gamma-1}\frac{(c^2)_b^+}{\rho_b^+} \\
  \end{array}
\right)\right|_{x^0=r_b}
=\frac{(c^2-(u^0)^2)_b^+(r_b)}{\gamma-1}>0
\end{align*}
obtained by direct calculations,
\eqref{eq337} equals to
\begin{eqnarray*}
&&\left.\left(
  \begin{array}{ccc}
    2\rho_b^+ (u^0)_b^+ & 1 & ((u^0)_b^+)^2 \\
    \rho_b^+ & 0 & (u^0)_b^+ \\
    (u^0)_b^+ & \frac{\gamma}{\gamma-1}\frac{1}{\rho_b^+}
    & -\frac{1}{\gamma-1}\frac{(c^2)_b^+}{\rho_b^+} \\
  \end{array}
\right)\right|_{x^0=r_b}
\left.\left(  \begin{array}{c}
     (\widehat{u^0})\\
    \hat{p} \\
    \hat{\rho} \\
  \end{array}
\right)\right|_{S^\psi}\nonumber\\
&=&
\left(
  \begin{array}{c}
     -\mu(p_b^+-p_b^-)(r_b)\\
    0 \\
    -\frac{2\mu }{\gamma-1}\Big((c^2)_b^+-(c^2)_b^-\Big)(r_b)\\
  \end{array}
\right)(\psi^p+r^p-r_b)+\text{\it higher-order terms}.
\end{eqnarray*}

We can solve these linear algebraic equations to get
\begin{align}
\widehat{u^0}|_{S^\psi}=&\mu_1\,(\psi^p+r^p-r_b)+g_1(U,U^-,\psi,D\psi),
\label{eq338}\\
\hat{p}|_{S^\psi}=&\mu_2\,(\psi^p+r^p-r_b)+g_2(U,U^-,\psi,D\psi),
\label{eq339}\\
\hat{\rho}|_{S^\psi}=&\mu_3\,(\psi^p+r^p-r_b)+g_3(U,U^-,\psi,D\psi).
\label{eq340}
\end{align}
Using $A(S)=p\rho^{-\gamma}$, it also holds that
\begin{eqnarray}
\widehat{A(S)}|_{S^\psi}=\mu_4\,(\psi^p+r^p-r_b)+g_4(U,U^-,\psi,D\psi),\label{eq341}
\end{eqnarray}
where
\begin{align*}
\mu_1=&2\mu(u^0)^+_b(r_b)\left\{\frac{\gamma}{\gamma+1}+\frac{1}{(c^2-(u^0)^2)_b^+(r_b)}
\Big((c^2)_b^--(c^2)_b^+\Big)(r_b)\right\}>0,\\
\mu_2=&-2\mu\rho_b^+(r_b)\left\{\frac{1}{\gamma+1}\Big((\gamma-1)(u^0)^2+c^2\Big)_b^+(r_b)
+\frac{((u^0)^+_b)^2(r_b)}{(c^2-(u^0)^2)_b^+(r_b)}\Big((c^2)_b^--(c^2)_b^+\Big)(r_b)\right\}<0,\\
\mu_3=&-2\mu\rho_b^+(r_b)\left\{\frac{\gamma}{\gamma+1}+\frac{1}{(c^2-(u^0)^2)_b^+(r_b)}
\Big((c^2)_b^--(c^2)_b^+\Big)(r_b)\right\}<0,\\
\mu_4=&2\mu(\rho_b^+(r_b))^{1-\gamma}\left\{\frac{\gamma-1}{\gamma+1}\Big(c^2-(u^0)^2\Big)_b^+
(r_b)+\Big((c^2)_b^--(c^2)_b^+\Big)(r_b)\right\}>0,
\end{align*}
and $g_k (k=1,2,3,4)$ are higher-order terms.  Observing that if the friction disappears, namely $\mu=0$, then all the coefficients above are zero, and there is no couplings in equations \eqref{eq338}-\eqref{eq340}  on the position of shock-front. This is one of the key point why friction may have a stabilization effect on transonic shocks.

From \eqref{eq339} and \eqref{eq341}, we also have
\begin{eqnarray}\label{eq344s}
\widehat{A(s)}|_{S^\psi}=\frac{\mu_4}{\mu_2}\hat{p}|_{S^\psi}+g_4-\frac{\mu_4}{\mu_2}g_2.
\end{eqnarray}

\subsubsection{Divergence of tangential velocity field on shock-front}\label{sec245}
Now we restrict \eqref{eq331} on $S^\psi$. So particularly $x^0$ should be replaced by $\psi$. Using the commutator relation for a function $f$ defined on $\Omega$:
\begin{eqnarray*}
\p_\beta(f|_{S^\psi})=\p_\beta\psi(\p_0f)|_{S^\psi}+(\p_\beta f)|_{S^\psi}, \quad \beta=1,2,
\end{eqnarray*}
we have
\begin{eqnarray}
G_1|_{S^\psi}
=\left.\left(\frac{-\rho u^0}{\left(\frac{u^0}{c}\right)^2-1}\right)\right|_{S^\psi}
\left(\p_\beta(u^\beta|_{S^\psi})-\p_\beta\psi(\p_0u^\beta)|_{S^\psi}\right),\label{eq342}
\end{eqnarray}
and
\begin{align*}
\Big\{\p_\beta\psi(u^j\p_ju^\beta+\frac{1}{\rho}\p_\beta p)\Big\}\Big|_{S^\psi}
&=\left.\left\{(u^0-u^\delta\p_\delta\psi)\p_\beta\psi \p_0u^\beta+\frac{1}{\rho}\p_\beta\psi\p_\beta p\right\}\right|_{S^\psi}\nonumber\\
&\qquad+\, \p_\beta\psi(u^\sigma|_{S^\psi})\p_\sigma(u^\beta|_{S^\psi}).
\end{align*}

One then solves $(\p_\beta\psi \p_0u^\beta)|_{S^\psi}$, and  \eqref{eq342} becomes
\begin{align*}
G_1|_{S^\psi}&=\left.\left(\frac{-\rho u^0}{\left(\frac{u^0}{c}\right)^2-1}\right)\right|_{S^\psi}
\p_\beta(u^\beta|_{S^\psi})+\left.\left(\frac{\rho u^0}{\left(\frac{u^0}{c}\right)^2-1}\frac{1}{u^0-u^\delta\p_\delta\psi}\right)\right|_{S^\psi}\nonumber\\
&\quad\times\Big\{
\p_\beta\psi(D_uu^\beta+\frac{1}{\rho}\p_\beta p)-\frac{1}{\rho}\p_\beta\psi\p_\beta p-\p_\beta\psi u^\sigma \p_\sigma(u^\beta|_{S^\psi})\Big\}\Big|_{S^\psi}.
\end{align*}
Similarly, one may replace the normal derivatives by tangential derivatives to compute $G_2|_{S^\psi}$ and $G_3|_{S^\psi}$.
Then \eqref{eq331} becomes
\begin{eqnarray}\label{eq343p}
\left.\left\{\p_0\hat{p}+\gamma_1\hat{p}+\gamma_2\p_\beta(u^\beta|_{S^\psi})\right\}\right|_{S^\psi}
+{G}_4=0.
\end{eqnarray}
Here $\gamma_1<0, \gamma_2<0$  are constants determined by the background solution, and
\begin{align}\label{eq343}
{G}_4&\triangleq\left.\left(\frac{\rho u^0}{\left(\frac{u^0}{c}\right)^2-1}-\gamma_2\right)\right|_{S^\psi}\p_\beta(u^\beta|_{S^\psi})
-\mu\left\{\left.\left(\frac{\gamma t^2-t+2}{(1-t)^2}\right)\right|_{S^\psi}-\left.\frac{\gamma t^2-t+2}{(1-t)^2}\right|_{x^0=r_b}\right\}(\hat{p}|_{S^\psi})\nonumber\\
&+\left.\left\{\frac{ u^0}{\left(\frac{u^0}{c}\right)^2-1}\left(\frac{\p_\beta\psi }{u^0-u^\delta\p_\delta\psi}
\left(\p_\beta p+\rho u^\sigma\p_\sigma(u^\beta|_{S^\psi})\right)+\left(\frac{1}{c^2}+\frac{1}{(u^0)^2}\right)u^\beta\p_\beta p
\right)\right\}\right|_{S^\psi}\nonumber\\
&+\left.\left\{\frac{1}{\left(\frac{u^0}{c}\right)^2-1}\frac{u^\beta}{u^0-u^\delta\p_\delta\psi}
\left(\frac{\rho^\gamma}{\gamma-1}\p_\beta(A(s)|_{S^\psi})-\rho\p_\beta(E|_{S^\psi})-\mu\rho (u^0)^2\p_\beta\psi\right)\right\}\right|_{S^\psi}\nonumber\\
&+\left.\left\{\frac{1}{\left(\frac{u^0}{c}\right)^2-1}
\frac{1}{u^0-u^\delta\p_\delta\psi} \left( \rho u^\beta u^\sigma\p_\sigma(u^\beta|_{S^\psi})+\frac{1}{u^0}u^\beta u^\sigma\p_\sigma\psi \p_\beta p\right)\right\}\right|_{S^\psi}\nonumber\\
&+\, O(1)\left.\Big\{|\hat{U}|^2+|{\psi}-r_b|^2+\hat{E}\Big\}\right|_{S^\psi}-\mu
\left.\left\{\frac{t+\frac{2}{\gamma-1}}{(1-t)^2}\rho_b^\gamma \left(g_4-\frac{\mu_4}{\mu_2}g_2\right)\right\}\right|_{S^\psi}.
\end{align}

We see that \eqref{eq343p} is equivalent to
\begin{eqnarray}\label{eq344}
\p_\beta(u^\beta|_{S^\psi})=\mu_5\, (\p_0\hat{p})|_{S^\psi}+\mu_6\,
\psi^p+\mu_6\,(r^p-r_b)+g_5(U,U^-,\psi,DU,D\psi),
\end{eqnarray}
if we replace $\hat{p}|_{S^\psi}$ by ${\psi}$ via \eqref{eq339}. Here
$\mu_5>0,  \mu_6>0$ are constants determined by the background solution, and
\begin{eqnarray*}
g_5\triangleq-\frac{1}{\gamma_2}{G}_4-\frac{\gamma_1}{\gamma_2}g_2.
\end{eqnarray*}
\begin{remark}\label{rem33}
As observed in \cite[p.728]{LiuXuYuan2016}, in the expression of $g_5$, there appear first-order derivatives of $\hat{p}$, and  only first-order tangential derivatives of $A(s), u', E, \psi$ on $S^\psi$.
Also \eqref{eq344} is a first-order boundary condition on the shock-front. Together with \eqref{eq318}, we have a div-curl system of  the tangential velocity  ${u}'|_{S^\psi}$ on $\T^2$.
\end{remark}

\subsubsection{Problem (T1)}\label{sec246}
For functions $U=(E, A(s), p, u')$ and $\psi$ (note that $\hat{U}=U-U_b^+$ and $\psi=\psi^p+r^p$), we formulate the following problems:
\begin{eqnarray}
&&\begin{cases}\label{eq346}
 D_u \hat{E}+2\mu u^0 \hat{E}=\frac{2\mu u^0 }{\gamma-1}\rho_{b}^{\gamma-1}\widehat{A(s)}+\frac{2\mu u^0}{\rho_b} \hat{p}+H(U)&\text{in}\quad \Omega^+_\psi,\\[3pt]
E=E^-&\text{on}\quad S^\psi;
\end{cases}\\
&&\begin{cases}\label{eq347}
\mathcal{L}(\hat{p})=F_5(U, DU, D^2p)& \text{in}\quad \Omega^+_\psi,\\[3pt]
\hat{p}=p_1-p_b^+ &\text{on}\quad \Sigma_1,\\[3pt]
\hat{p}=\mu_2\,(\psi^p+r^p-r_b)+g_2(U,U^-,\psi,D\psi)&\text{on}\quad S^\psi;
\end{cases}\\
&&\begin{cases}\label{eq348}
D_uA(s)=0&\text{in}\quad\Omega^+_\psi,\\[3pt]
\widehat{A(s)}=\mu_4\,(\psi^p+r^p-r_b)+g_4(U,U^-,\psi,D\psi)&\text{on}\quad S^\psi;
\end{cases}\\
&&\begin{cases}\label{eq349}
\p_\beta\psi=\mu_0({u}^\beta|_{S^\psi})+g_0^\beta(U,U^-,D\psi),\qquad \beta=1,2\quad\text{on} \ \  \T^2,\\[3pt]
\int_{0}^{2\pi}\Big(\mu_0({u}^1|_{S^\psi})+g_0^1\Big)(\psi(s,\pi),s,\pi)ds=0,\\
\int_{0}^{2\pi}\Big(\mu_0({u}^2|_{S^\psi})+g_0^2\Big)(\psi(\pi,s),\pi,s)ds=0,\\
\p_\beta(u^\beta|_{S^\psi})=\mu_5\,
(\p_0\hat{p}|_{S^\psi})+\mu_6\,
\psi^p+\mu_6\,(r^p-r_b)+g_5(U,U^-,\psi,DU,D\psi);
\end{cases}\\
&&\begin{cases}\label{eq350}
D_uu^\beta=-\frac{1}{\rho}\p_\beta p&\text{in}\quad \Omega^+_\psi,\quad \beta=1,2,\\[3pt]
u^\beta=u^\beta_0&\text{on}\quad S^\psi.
\end{cases}
\end{eqnarray}
The initial data $u^\beta_0$ in \eqref{eq350} is obtained from the vector field $(u^1, u^2)$ on $\T^2$ defined  in \eqref{eq349}.

It is obvious now that for given supersonic flow $U^-$, the solution $(U, \psi)$ to these problems also solves the Euler system,  and the R-H conditions hold across $S^\psi.$ Hence  we could rewrite Problem (T) equivalently as the following Problem (T1).

\medskip
\fbox{
\parbox{0.90\textwidth}{
Problem (T1): Find $\psi$ and $U=U^+$ in $\Omega_\psi^+$ satisfying \eqref{eq211}, \eqref{eq213} and solving the problems \eqref{eq346}--\eqref{eq350}.}}
\medskip

\subsection{Problem (T2)}\label{sec25}

Acting the divergence operator to the first equation in \eqref{eq349} and using the second equation, we derive that
\begin{align}\label{eq351}
\Delta'\psi^p+\mu_7\psi^p=&\mu_0\mu_6(r^p-r_b)+\mu_0\mu_5\,(\p_0\hat{p}|_{S^\psi})\nonumber\\
&\quad+g_6(U, U^-, \psi, DU^-, DU, D\psi, D^2\psi),
\end{align}
with $g_6=\mu_0g_5+\p_\beta g_0^\beta$ and $\mu_7=-\mu_0\mu_6<0$. Here $\Delta'=\sum_{\beta=1}^2\p^2_{\beta}$ is the standard Laplace operator on $\T^2$.

Then using the third equation in \eqref{eq347}, we get
\begin{align}\label{eq352}
&\Delta'(\hat{p}|_{S^\psi})+\mu_7({\hat{p}}|_{S^\psi})+\mu_8(\p_0\hat{p}|_{S^\psi})\nonumber\\
&\qquad\qquad=g_8(U, U^-, \psi, DU, DU^-, D\psi, D^2U, D^2U^-, D^2\psi, D^3\psi),
\end{align}
where $\mu_8=-\mu_0\mu_2\mu_5>0$ and $g_8=\Delta'g_2+\mu_7g_2+\mu_2g_6.$

By the last equation in \eqref{eq349}, using the divergence theorem, and recall that $\int_{\T^2}\psi^p\dd x^1\dd x^2=0,$ we have
\begin{eqnarray}\label{eq353}
r^p-r_b=-\frac{1}{4\pi^2\mu_6}\int_{{\T}^2}\Big(\mu_5\,
(\p_0\hat{p}|_{S^\psi})+g_5(U,U^-,\psi,DU,D\psi)\Big)\, \dd x^1\dd x^2.
\end{eqnarray}
Substituting this into the third equation in \eqref{eq347}, we then obtain
\begin{eqnarray}\label{eq354}
\psi^p=\frac{1}{\mu_2}\left((\hat{p}|_{S^\psi})-{\mu_9}
\int_{{\T}^2}(\p_0\hat{p}|_{S^\psi})\,\dd x^1\dd x^2+{g_9(U,U^-,\psi,D\psi)}\right),
\label{177}
\end{eqnarray}
with $\mu_9=-\frac{\mu_2\mu_5}{4\pi^2\mu_6}>0$ and
$g_9=\frac{\mu_2}{4\pi^2\mu_6}\int_{{\T}^2}g_5\,\dd x^1\dd x^2-g_2.$

We now formulate the following Problem (T2), which  is equivalent to Problem (T1) ({\it cf.} \cite[p.730]{LiuXuYuan2016}).

\medskip
\fbox{
\parbox{0.90\textwidth}{
Problem (T2): Find $\psi$ and $U=\hat{U}+U_b^+$ that solve  \eqref{eq346}, \eqref{eq355}, \eqref{eq356}, \eqref{eq348}, \eqref{eq357} and \eqref{eq350}.}}

\begin{eqnarray}
&&\begin{cases}\label{eq355}
\mathcal{L}(\hat{p})=F_5(U, DU, D^2p)& \text{in}\quad \Omega^+_\psi,\\[3pt]
\hat{p}=p_1-p_b^+ &\text{on}\quad \Sigma_1,\\[3pt]
\Delta'(\hat{p}|_{S^\psi})+\mu_7({\hat{p}}|_{S^\psi})+\mu_8(\p_0\hat{p}|_{S^\psi})\\[3pt]
\qquad=g_8(U, U^-, \psi, DU, DU^-, D\psi, D^2U, D^2U^-, D^2\psi, D^3\psi)&\text{on}\quad S^\psi;
\end{cases}\\
&&\begin{cases}\label{eq356}
r^p-r_b=-\frac{1}{4\pi^2\mu_6}\int_{{\T}^2}\Big(\mu_5\,
(\p_0\hat{p}|_{S^\psi})+g_5(U,U^-,\psi,DU,D\psi)\Big)\, \dd x^1\dd x^2,\\[3pt]
\psi^p=\frac{1}{\mu_2}\Big((\hat{p}|_{S^\psi})-{\mu_9}
\int_{{\T}^2}(\p_0\hat{p}|_{S^\psi})\,\dd x^1\dd x^2+{g_9(U,U^-,\psi,D\psi)}\Big),\\[3pt]
\psi=\psi^p+r^p;
\end{cases}\\
&&\begin{cases}\label{eq357}
\p_2({u}^1|_{S^\psi})-\p_1({u}^2|_{S^\psi})=-\frac{1}{\mu_0}(\p_2g_0^1-\p_1g_0^2),\\[3pt]
\p_\beta(u^\beta|_{S^\psi})=\mu_5\,
(\p_0\hat{p}|_{S^\psi})+\mu_6\,
\psi^p+\mu_6\,(r^p-r_b)+g_5(U,U^-,\psi,DU,D\psi),\\[3pt]
\int_{0}^{2\pi}\Big(\mu_0({u}^1|_{S^\psi})+g_0^1\Big)(\psi(s,x^2),s,x^2)\,\dd s=0,\\[3pt]
\int_{0}^{2\pi}\Big(\mu_0({u}^2|_{S^\psi})+g_0^2\Big)(\psi(x^1,s),x^1,s)\,\dd s=0 \quad \text{on}\, \T^2.
\end{cases}
\end{eqnarray}

\subsection{Problem (T3)}\label{sec26}
The above equations and boundary conditions are formulated in $\Omega_\psi^+.$
Supposing $\psi\in C^{4,\alpha}(\T^2)$, we introduce a $C^{4,\alpha}$--homeomorphism $ \Psi:
(x^0,x')\in\Omega_\psi^+\mapsto ({y}^0,y')\in\mathcal{M}\triangleq(r_b,L)\times {\T}^2$ defined by
\begin{eqnarray}\label{eq362}
{y}^0=\frac{x^0-L}{L-\psi(x')}(L-r_b)+L,\qquad y'=(y^1, y^2)=x'=(x^1, x^2)
\end{eqnarray}
to normalize $\Omega_\psi^+$ to $\mathcal{M}$. Then
$\p\mathcal{M}=\mathcal{M}_0\cup\mathcal{M}_1$ with $\mathcal{M}_{0}=\{r_b\}\times{\T}^2$ and $\mathcal{M}_{1}=\{L\}\times{\T}^2$. They are respectively the images of $S^\psi$ and $\Sigma_1$.

To avoid complication of notations, {\it in the following we still write the unknowns $U(\Psi^{-1}(y))$  in $y$-coordinates as $U$ etc,} and write the velocity
\begin{eqnarray}\label{eq363}
\left(\frac{L-r_b}{L-\psi(y')}u^0(\Psi^{-1}(y))+\frac{y^0-L}{L-\psi(y')}u^\beta\p_\beta \psi(y'), u^1(\Psi^{-1}(y)), u^2(\Psi^{-1}(y))\right)^\top
\end{eqnarray}
still as $u$. We have
\begin{align*}
\Delta'_x\hat{p}&=\Delta'_y\hat{p}+O(1)\Big(D^2\hat{p}D\psi
+D\hat{p}D^2\psi+D\hat{p}D\psi\Big),\\
\left.\left({\frac{\p \hat{p}}{\p{x^0}}}\right)\right|_{S^\psi}&=i^*\left(\frac{\p \hat{p}}{\p{y^0}}\right)+O(1)(\psi-r_b)D\hat{p}.
\end{align*}
Here we use $i^*$ to denote the trace operator on $\mathcal{M}_0$.

Hence Problem (T2) could be rewritten as the following Problem (T3) in the $y$-coordinates, where we use $\overline{F}$ or $\bar{g}$ to denote the corresponding higher-order terms.

\medskip
\fbox{
\parbox{0.90\textwidth}{
Problem (T3): Find $\psi\in C^{4,\alpha}(\T^2)$ and $U=U_b^++\hat{U}$ that solve the following problems \eqref{eq364}--\eqref{eq369}. The initial data $u_0'$ in \eqref{eq369}  is the vector field corresponding to $\bar{u}_0'$ on $\T^2$ obtained from \eqref{eq368}.}}

\begin{eqnarray}
&&\begin{cases}\label{eq364}
D_u \hat{E}+2\mu u^0 \hat{E}=\frac{2\mu u^0 }{\gamma-1}\rho_{b}^{\gamma-1}\widehat{A(s)}+\frac{2\mu u^0}{\rho_b} \hat{p}+\overline{H}(U, \psi, D\psi)&\text{in}\quad \mathcal{M},\\[3pt]
\hat{E}=E^--E_b^-&\text{on}\quad \mathcal{M}_0;
\end{cases}\\
&&\begin{cases}\label{eq365}
\mathcal{L}(\hat{p})=\overline{F}_5(U, \psi, DU, D\psi, D^2p, D^2\psi)& \text{in}\quad \mathcal{M},\\[3pt]
\hat{p}=p_1-p_b^+ &\text{on}\quad \mathcal{M}_1,\\[3pt]
\Delta'(i^*\hat{p})+\mu_7(i^*{\hat{p}})+\mu_8(i^*\p_0\hat{p})\\[3pt]
\qquad=\bar{g}_8(U,U^-,\psi,DU,D\psi,D^2U,D^2\psi,D^3\psi)  &\text{on}\quad \mathcal{M}_0;
\end{cases}\\
&&\begin{cases}\label{eq366}
r^p-r_b=-\frac{1}{4\pi^2\mu_6}\int_{{\T}^2}\Big(\mu_5\,
i^*(\p_0\hat{p})+\bar{g}_5(U,U^-,\psi,DU,D\psi)\Big)\, \dd x^1\dd x^2,\\[3pt]
\psi^p=\frac{1}{\mu_2}\Big(i^*(\hat{p})-{\mu_9}
\int_{{\T}^2}i^*(\p_0\hat{p})\,\dd x^1\dd x^2+{\bar{g}_9(U,U^-,\psi,D\psi)}\Big),\\[3pt]
\psi=\psi^p+r^p;
\end{cases}\\
&&\begin{cases}\label{eq367}
D_uA(s)=0&\text{in}\quad \mathcal{M},\\[3pt]
i^*(\widehat{A(s)})=\mu_4\,(\psi^p+r^p-r_b)+\bar{g}_4(U,U^-,\psi,D\psi)&\text{on}\quad \mathcal{M}_0;
\end{cases}\\
&&\begin{cases}\label{eq368}
\p_2({\bar{u}}^1_0)-\p_1({\bar{u}}^2_0)=-\frac{1}{\mu_0}\Big(\p_2\bar{g}_0^1(U, U^-,D\psi)-\p_1\bar{g}_0^2(U, U^-,D\psi)\Big),\\[3pt]
\p_\beta(\bar{u}^\beta_0)=\mu_5\,
i^*(\p_0\hat{p})+\mu_6\,\psi^p+\mu_6\,(r^p-r_b)+\bar{g}_5(U,U^-,\psi,DU,D\psi),\\[3pt]
\int_{0}^{2\pi}\Big(\mu_0\bar{u}^1_0+\bar{g}_0^1\Big)(s,x^2)\,\dd s=0,\\[3pt]
\int_{0}^{2\pi}\Big(\mu_0\bar{u}^2_0+\bar{g}_0^2\Big)(x^1,s)\,\dd s=0\qquad\text{on}\, \T^2;
\end{cases}
\end{eqnarray}
\begin{eqnarray}
&&\begin{cases}\label{eq369}
D_uu^\beta=-\frac{1}{\rho}\p_\beta \hat{p}+\overline{W}_{\beta}(U,\psi,Dp, D\psi)&\text{in}\quad \mathcal{M},\quad \beta=1,2,\\[3pt]
u^\beta=u^\beta_0&\text{on}\quad \mathcal{M}_0.
\end{cases}
\end{eqnarray}

\begin{remark}
Recalling \eqref{eq324}, in \eqref{eq365} we should have
\begin{align}\label{eq370}
\mathcal{L}(\hat{p})&\triangleq
(t(y^0)-1)\p_0^2\hat{p}-\p_1^2\hat{p}-\p_2^2\hat{p}+\mu
d_1(t(y^0))\p_0\hat{p}+\mu^2d_2(t(y^0))\hat{p}\nonumber\\
&\qquad+\, \mu^2\rho_b(y^0)d_3(t(y^0))\hat{E}
+\mu^2\rho_b(y^0)^\gamma d_4(t(y^0))\widehat{A(s)}\nonumber\\
&=\overline{F}_5\triangleq F_5+{F}_6,
\end{align}
where the coefficients are known functions of $y^0$, and $F_5=F_5(U,\psi, DU,D^2p, D\psi, D^2\psi)$ is the higher-order term appeared below \eqref{eq324} in the $y$-coordinates, and
\begin{eqnarray}\label{eq371}
{F}_6=O(1)\Big(D^2\hat{p}D\psi+D\hat{p}D^2\psi+D\hat{p}D\psi\Big).
\end{eqnarray}
We also note that
\begin{eqnarray*}
&&\bar{H}=H+O(1)(\psi-r_b)\hat{U}+O(1)(\hat{U}D\psi),\quad
\bar{g}_k=g_k+O(1)(\psi-r_b)D\hat{p},\quad k=5,8,9,\\
&& \bar{g}_4=g_4, \quad \bar{g}^\beta_0=g^\beta_0, \quad \overline{W}_{\beta}=-\frac{1}{\rho}\left(\frac{y^0-L}{L-\psi(y')}\p_\beta\psi\right)\p_0 \hat{p},\quad\beta=1,2.
\end{eqnarray*}
\end{remark}

\subsection{Problem (T4)}\label{sec27}
Since the elliptic problem \eqref{eq365} is coupled with the other hyperbolic problems, we need to further reformulate Problem (T3) equivalently as the following Problem (T4).

We now consider the Cauchy problems \eqref{eq364} and  \eqref{eq367}:
\begin{eqnarray}
&&\begin{cases}\label{eq378E}
D_u \hat{E}+2\mu u^0 \hat{E}=\frac{2\mu u^0 }{\gamma-1}\rho_{b}^{\gamma-1}\widehat{A(s)}+\frac{2\mu u^0}{\rho_b} \hat{p}+\overline{H}(U, \psi, D\psi)&\text{in}\quad \mathcal{M},\\[3pt]
\hat{E}=E^--E_b^-&\text{on}\quad \mathcal{M}_0;
\end{cases}\\
&&\begin{cases}\label{eq378}
D_uA(s)=0&\text{in}\quad\mathcal{M},\\[3pt]
i^*(\widehat{A(s)})=\frac{\mu_4}{\mu_2}i^*(\hat{p})+\bar{g}_4
-\frac{\mu_4}{\mu_2}\bar{g}_2&\text{on}\quad \mathcal{M}_0.
\end{cases}
\end{eqnarray}
Here we have replaced the boundary condition in \eqref{eq367} by \eqref{eq344s}, and $ \bar{g}_2=g_2$.

For the vector field $u$ defined in $\mathcal{M}$, we consider the non-autonomous vector field $\frac{u'}{u^0}(y^0, y')$ defined for $y'=(y^1, y^2)\in \T^2$ and $y^0\in[r_b, L]$. For $\bar{y}\in \T^2$, we write the integral curve passing $(r_b, \bar{y})$ as $y'=\varphi(y^0, \bar{y})$, which is a $C^{k,\alpha}$ function in $\mathcal{M}$ if $u\in C^{k,\alpha}(\overline{\mathcal{M}})$ and $u^0>\delta$ for a positive  constant $\delta$, and $k\in\mathbb{N}$. For fixed $y^0$, the map $\varphi_{y^0}: \T^2\to \T^2, \ \bar{y}\mapsto y'=\varphi(y^0, \bar{y})$  is a $C^{k,\alpha}$ homeomorphism, then $\bar{y}=(\varphi_{y^0})^{-1}(y')$. Note that $\varphi_{r_b}$ is the identity map on $\T^2$. Also recall the following lemma appeared in \cite[p.733]{LiuXuYuan2016}:

\begin{lemma}\label{lem301}
Suppose that $u=(u^0,u')\in C^{0,1}$ and $u^0>\delta$. There is a positive constant $C=C(\delta,L-r_b)$ so that for any $y'\in \T^2$ and $y^0\in[r_b, L]$, it holds
\begin{eqnarray}\label{eq379}
\left|(\varphi_{y^0})^{-1}y'-y'\right|\le C\norm{u'}_{C^0(\overline{\mathcal{M}})}.
\end{eqnarray}
\end{lemma}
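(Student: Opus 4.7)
The plan is to exploit the integral representation of the flow directly: since $\varphi(y^0,\bar y)$ solves the non-autonomous ODE
\begin{eqnarray*}
\frac{\p\varphi}{\p y^0}(y^0,\bar y)=\frac{u'}{u^0}(y^0,\varphi(y^0,\bar y)),\qquad \varphi(r_b,\bar y)=\bar y,
\end{eqnarray*}
integrating from $r_b$ to $y^0$ yields $\varphi(y^0,\bar y)-\bar y=\int_{r_b}^{y^0}\frac{u'}{u^0}(\tau,\varphi(\tau,\bar y))\,\dd\tau$. The hypothesis $u\in C^{0,1}$ with $u^0>\delta>0$ guarantees that the right-hand side is a Lipschitz function of $\bar y$, so the Cauchy--Lipschitz theorem applies and $\varphi_{y^0}$ is a diffeomorphism of $\T^2$ for every $y^0\in[r_b,L]$; in particular $(\varphi_{y^0})^{-1}$ is well defined.

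The second step is to rewrite the quantity we want to estimate in terms of this integral. Given $y'\in\T^2$, set $\bar y\triangleq(\varphi_{y^0})^{-1}(y')$, so that $\varphi(y^0,\bar y)=y'$. Then
\begin{eqnarray*}
(\varphi_{y^0})^{-1}(y')-y'=\bar y-\varphi(y^0,\bar y)=-\int_{r_b}^{y^0}\frac{u'}{u^0}(\tau,\varphi(\tau,\bar y))\,\dd\tau.
\end{eqnarray*}
Taking absolute values and using $u^0>\delta$ together with the uniform bound on $u'$ gives
\begin{eqnarray*}
\left|(\varphi_{y^0})^{-1}(y')-y'\right|\le\int_{r_b}^{y^0}\frac{|u'(\tau,\varphi(\tau,\bar y))|}{u^0(\tau,\varphi(\tau,\bar y))}\,\dd\tau\le\frac{L-r_b}{\delta}\,\norm{u'}_{C^0(\overline{\mathcal{M}})},
\end{eqnarray*}
which is the asserted inequality with $C=C(\delta,L-r_b)=(L-r_b)/\delta$.

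There is essentially no obstacle here; the statement is a one-line consequence of Grönwall-type reasoning applied to the flow of the normalized velocity field $u'/u^0$. The only small point worth noting is that the estimate is uniform in $y'\in\T^2$ because the bound on the integrand depends only on $\norm{u'}_{C^0}$ and $\delta$, not on the particular trajectory $\tau\mapsto\varphi(\tau,\bar y)$; in particular the argument does not need any control on $Du'$ beyond what is already used to ensure that $\varphi_{y^0}$ is a homeomorphism. This is precisely the form in which the lemma will be applied later, where $u'$ is a perturbation of the background tangential velocity (which vanishes) and hence $\norm{u'}_{C^0}$ is small.
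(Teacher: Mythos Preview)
Your proof is correct and follows the same approach as the paper: set $\bar y=(\varphi_{y^0})^{-1}(y')$ and bound $|\bar y-\varphi_{y^0}(\bar y)|$ by integrating the ODE $\frac{\p\varphi}{\p y^0}=u'/u^0$ from $r_b$ to $y^0$. The paper's proof is the one-line version of what you wrote, and your explicit constant $C=(L-r_b)/\delta$ is exactly what the paper's $C=C(\delta,L-r_b)$ amounts to.
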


\begin{proof} There holds
$\left|\varphi_{y^0}(\bar{y})-\bar{y}\right|\le\int_{r_b}^{y^0}\left|\frac{u'}{u^0}(s, \varphi(s,\bar{y}))\right|\,\dd s\le C\norm{u'}_{C^0(\overline{\mathcal{M}})}$
as desired.
\end{proof}

We write the unique solutions to the linear transport equations \eqref{eq378E} \eqref{eq378} respectively as:
\begin{align}\label{eq380E}
\hat{E}(y)&=\hat{E}(y^0, y')=e^{2\mu (r_b-y^0)}(E^--E_b^-)(\bar{y})\nonumber\\
&\quad+\int_{r_b}^{y^0}e^{2\mu(\tau-y^0)}    \left(\frac{2\mu}{\gamma-1}\rho_{b}^{\gamma-1}\widehat{A(s)}+\frac{2\mu}{\rho_b} \hat{p}+\frac{1}{u^0}\overline{H}\right)(\tau, \varphi_\tau(\bar{y}))\,\dd \tau,\\
A(s)(y)&=A(s)(y^0, y')=(i^*A(s))(\bar{y}).\label{eq380}
\end{align}
Since the entropy is a constant behind the shock-front for the background solution, we have
\begin{align}\label{eq381}
\widehat{A(s)}(y)
&=i^*(\widehat{A(s)})(y')+\Big((i^*A(s))(\bar{y})-(i^*A(s))( y')\Big)\nonumber\\
&=\frac{\mu_4}{\mu_2}i^*(\hat{p})+\bar{g}_4-\frac{\mu_4}{\mu_2}\bar{g}_2+
\Big((i^*A(s))(\bar{y})-(i^*A(s))(y')\Big).
\end{align}

Now set
\begin{align}
{F}_7&=-\mu^2\rho_bd_3(t)\int_{r_b}^{y^0}e^{2\mu(\tau-y^0)}    \left(\frac{2\mu}{\rho_b} \Big(\hat{p}(\tau, \varphi_\tau(\bar{y}))-\hat{p}(\tau, y')\Big)+\frac{1}{u^0}\overline{H}\right)\,\dd \tau,\label{eq382E}\\
{F}_8&=-\mu^2\rho_bd_3(t)\int_{r_b}^{y^0}\frac{2\mu}{\gamma-1}e^{2\mu(\tau-y^0)}
\rho_{b}^{\gamma-1}(\tau, \varphi_\tau(\bar{y})) \,\dd \tau\nonumber\\
&\qquad\times\left\{\bar{g}_4-\frac{\mu_4}{\mu_2}\bar{g}_2+\Big((i^*A(s))(\bar{y})
-(i^*A(s))(y')\Big)\right\},
\label{eq382S}\\
{F}_9&=-\mu^2\rho_b^\gamma d_4(t)
\left\{\bar{g}_4-\frac{\mu_4}{\mu_2}\bar{g}_2+\Big((i^*A(s))(\bar{y})-(i^*A(s))(y')\Big)\right\},
\label{eq382}
\end{align}
which are higher-order terms (note that $\p_\beta \hat{p}$ and $\p_\beta A(s)$ are small, and $\varphi_{y^0}$ is close to the identity map since $u'$ is nearly zero, so $|(\varphi_{y^0})^{-1}(y')-y'|$ is small by \eqref{eq379}). Then
we could write the elliptic equation  \eqref{eq370} as
\begin{align}
&\Big(t(y^0)-1\Big)\p_0^2\hat{p}-\p_1^2\hat{p}-\p_2^2\hat{p}+\mu d_1(t(y^0))\p_0\hat{p}+\mu^2d_2(t(y^0))\hat{p}\nonumber\\
&+\, 2\mu^3e^{-2\mu y^0}\rho_b(y^0)d_3(t(y^0))
\int_{r_b}^{y^0}\frac{ e^{2\mu\tau}}{\rho_b(\tau)}\hat{p}(\tau, {y}')\,\dd \tau
\nonumber\\
&+\, \mu^2\frac{\mu_4}{\mu_2}\left(\rho_b(y^0)^\gamma d_4(t(y^0))+ \rho_b(y^0)d_3(t(y^0))\int_{r_b}^{y^0}\frac{2\mu}{\gamma-1}e^{2\mu(\tau-y^0)}\rho_{b}^{\gamma-1}(\tau)    \,\dd \tau\right)i^*(\hat{p})\nonumber\\
=&-\mu^2 e^{2\mu (r_b-y^0)}\rho_b(y^0)d_3(t(y^0))(E^--E_b^-)(\bar{y})
+\bar{F}_5+{F}_7+{F}_8+{F}_9.\label{eq383}
\end{align}
If we define
\begin{align*}
e_1(y^0)& = (t(y^0)-1)<0,\qquad e_2(y^0)={\mu}d_1(t(y^0)),\\
e_3(y^0)& = \mu^2d_2(t(y^0)),\qquad\quad
e_4(y^0)=2\mu^3e^{-2\mu y^0}\rho_b(y^0)d_3(t(y^0)),\\
e_5(y^0)& = \mu^2\frac{\mu_4}{\mu_2}\left(\rho_b(y^0)^\gamma d_4(t(y^0))+ \rho_b(y^0)d_3(t(y^0))\int_{r_b}^{y^0}\frac{2\mu}{\gamma-1}e^{2\mu(\tau-y^0)}
\rho_{b}^{\gamma-1}(\tau)    \,\dd \tau\right),\\
e_6(y^0)&=-\mu^2e^{2\mu (r_b-y^0)}\rho_b(y^0)d_3(t(y^0)),\, b(\mu,\tau)=\frac{ e^{2\mu \tau}}{\rho_b(\tau)}>0,\,  \hat{E}^{-}=(E^--E_b^-)(\bar{y}),\\
F& = F(U,\psi, DU,D^2p, D\psi, D^2\psi)\triangleq \overline{F}_5+{F}_7+{F}_8+{F}_9,
\end{align*}
then equation \eqref{eq383} simply reads
\begin{align}\label{eq388}
\mathfrak{L}(\hat{p})&\triangleq e_1(y^0)\p_0^2\hat{p}-\p_1^2\hat{p}-\p_2^2\hat{p}+e_2(y^0)\p_0\hat{p}+e_3(y^0)\hat{p}\nonumber \\
&\qquad+\, e_4(y^0)\int_{r_b}^{y^0}b(\mu,\tau) \hat{p}(\tau, {y}')\,\dd \tau+e_5(y^0)i^*(\hat{p})\nonumber \\ &=e_6(y^0)\hat{E}^-+F.
\end{align}
Note that there are nonlocal terms $e_4(y^0)\int_{r_b}^{y^0}{b}(\mu,\tau)\hat{p}(\tau, {y}')\,\dd \tau$ and $e_5(y^0)i^*(\hat{p})$. This is quite different from \cite{LiuXuYuan2016} and shows the spectacular influence of friction: which introduces stronger coupling in the Euler equations, resulted in stronger integral-type nonlocal terms. In conclusion, problem \eqref{eq365} can be reformulated as:
\begin{eqnarray}
\begin{cases}\label{eq389}
\mathfrak{L}(\hat{p})=e_6(y^0)\hat{E}^-+F& \text{in}\quad \mathcal{M},\\[3pt]
\hat{p}=p_1-p_b^+ &\text{on}\quad \mathcal{M}_1,\\[3pt]
\Delta'(i^*\hat{p})+\mu_7(i^*{\hat{p}})+\mu_8(i^*\p_0\hat{p})=\bar{g}_8  &\text{on}\quad \mathcal{M}_0.
\end{cases}
\end{eqnarray}

We then state Problem (T4), which is equivalent to Problem (T3), as can be seen from the above derivations.

\medskip
\fbox{
\parbox{0.90\textwidth}{
Problem (T4): Find $\psi\in C^{4,\alpha}(\T^2)$ and $U=U_b^++\hat{U}$ defined in $\mathcal{M}$ that solve problems \eqref{eq364}, \eqref{eq389} and \eqref{eq366}--\eqref{eq369}.}}

\section{A linear second-order nonlocal elliptic equation with Venttsel boundary condition}\label{sec4}
To attack problem \eqref{eq389},  we study in this section the following  linear second-order nonlocal elliptic equation subjected to a Venttsel boundary condition on $\mathcal{M}_0$:
\begin{eqnarray}\label{eq401}
\begin{cases}
\mathfrak{L}(\hat{p})=f(y)& \text{in}\quad \mathcal{M},\\[3pt]
\hat{p}=h_1(y') &\text{on}\quad \mathcal{M}_1,\\[3pt]
\Delta'(i^*\hat{p})+\mu_7(i^*{\hat{p}})+\mu_8(i^*\p_0\hat{p})=h_0(y') &\text{on}\quad \mathcal{M}_0.
\end{cases}
\end{eqnarray}
Here $f\in C^{k-2,\alpha}(\overline{\mathcal{M}}), h_1\in C^{k,\alpha}(\T^2)$ and $h_0\in C^{k-2,\alpha}(\T^2)$ are given nonhomogeneous terms, and $k=2,3,\ldots$

In 1959, A. D. Venttsel proposed the now called Venttsel problem of second-order elliptic equations \cite{Venttsel1959}, from the view point of probability theory. A survey of the mathematical studies of Venttsel problem could be found in \cite{AN-2000}. It is quite interesting to see that such Venttsel problems appear so naturally in the studies of transonic shocks, not only for the geometric effects considered in \cite{LiuXuYuan2016}, but also the effects of frictions considered in this paper.  As we mentioned in \cite{LiuXuYuan2016}, the linear theory established by  Luo and Trudinger in \cite{LN1991} cannot be applied directly to our problem, since the elliptic operator $\mathfrak{L}$ contains  nonclassical nonlocal terms, and the coefficients of the zeroth-order term may change sign. So in the following we mainly follow the procedure in \cite{LiuXuYuan2016} to study problem \eqref{eq401}.

\subsection{Uniqueness of solutions in Sobolev spaces}
We firstly study under what conditions a strong solution $\hat{p}$ in Sobolev space $H^2(\mathcal{M})$ with $i^*\hat{p}\in H^2(\mathcal{M}_0)$ to problem \eqref{eq401} is unique. To this end, we consider the homogeneous problem
\begin{eqnarray}\label{eq402}
\begin{cases}
\mathfrak{L}(\hat{p})=0& \text{in}\quad \mathcal{M},\\[3pt]
\hat{p}=0 &\text{on}\quad \mathcal{M}_1,\\[3pt]
\Delta'(i^*\hat{p})+\mu_7(i^*{\hat{p}})+\mu_8(i^*\p_0\hat{p})=0 &\text{on}\quad \mathcal{M}_0.
\end{cases}
\end{eqnarray}

\begin{lemma}[Regularity]\label{lem4001}
If $\hat{p}\in H^2(\mathcal{M})$ with $i^*\hat{p}\in H^2(\mathcal{M}_0)$ is a strong solution to \eqref{eq402}, then $\hat{p}$ is a classical solution and  belongs to $C^\infty(\overline{\mathcal{M}})$.
\end{lemma}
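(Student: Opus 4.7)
The plan is a bootstrap in the Sobolev scale, concluded by Sobolev embedding, to upgrade the strong solution to $C^\infty(\overline{\mathcal M})$. The two structural features I exploit are: (i) every coefficient of $\mathfrak L$ depends only on the normal variable $y^0$, and (ii) both the cylinder $\mathcal M=(r_b,L)\times\T^2$ and the two boundary operators are translation-invariant in the tangential variables $y'=(y^1,y^2)$. Assuming $\hat p\in H^k(\mathcal M)$ with $i^*\hat p\in H^k(\T^2)$ (initially $k=2$), the goal of a single inductive step is to obtain the corresponding $H^{k+1}$ statements.

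First I would gain tangential regularity by Nirenberg's difference-quotient method in the directions $y^1,y^2$. Since $\partial_\beta$ commutes with $\mathfrak L$, with the Dirichlet condition on $\mathcal M_1$, and with the Venttsel operator $\Delta'+\mu_7+\mu_8\,\partial_0$ on $\mathcal M_0$, the standard difference-quotient estimate applies uniformly on $\overline{\mathcal M}$ (no tangential boundary is present since $\T^2$ is closed), yielding $\partial_\beta\hat p\in H^k$ for $\beta=1,2$. The nonlocal terms cause no trouble: $e_4(y^0)\!\int_{r_b}^{y^0}\! b(\mu,\tau)(\partial_\beta\hat p)(\tau,y')\,d\tau$ and $e_5(y^0)\,i^*(\partial_\beta\hat p)$ are controlled by the tangential regularity of $\hat p$ at the previous level, via Cauchy--Schwarz in $\tau$ for the first and the $y'$-translation invariance of the trace for the second. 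Normal regularity is then recovered algebraically: since $e_1(y^0)=t(y^0)-1<0$ on $[r_b,L]$, the bulk equation solves for
\begin{equation*}
\partial_0^2\hat p=\frac{1}{e_1}\Bigl(\partial_1^2\hat p+\partial_2^2\hat p-e_2\partial_0\hat p-e_3\hat p-e_4\!\int_{r_b}^{y^0}\! b(\mu,\tau)\hat p\,d\tau-e_5\, i^*\hat p\Bigr),
\end{equation*}
and commuting $\partial_0$ through this identity, together with the already-improved tangential regularity, produces $\hat p\in H^{k+1}(\mathcal M)$.

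The key step is the simultaneous upgrade of the trace $i^*\hat p$. I would read the Venttsel condition as a stand-alone elliptic equation on the closed surface $\T^2$,
\begin{equation*}
\Delta' v+\mu_7\, v=-\mu_8\,i^*(\partial_0\hat p),\qquad v=i^*\hat p,
\end{equation*}
whose right-hand side, by the trace theorem applied to the newly obtained $\partial_0\hat p\in H^k(\mathcal M)$, lies in $H^{k-1/2}(\T^2)$. Standard elliptic regularity on the torus --- which requires only ellipticity of $\Delta'+\mu_7$, not its invertibility --- then gives $v\in H^{k+3/2}(\T^2)\hookrightarrow H^{k+1}(\T^2)$. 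Feeding this improved trace back into the bulk PDE through the term $e_5\,i^*\hat p$ closes one round of the induction. Iterating and invoking $\bigcap_{k\ge 0}H^k(\overline{\mathcal M})\subset C^\infty(\overline{\mathcal M})$ yields smoothness and hence classical solvability.

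The main obstacle I foresee is breaking the circularity between the bulk regularity of $\hat p$ and the boundary regularity of $i^*\hat p$ at each inductive step. It is exactly the Venttsel structure --- a tangential elliptic operator on $\mathcal M_0$ with the normal trace $i^*\partial_0\hat p$ as forcing --- that lets one trade $3/2$ tangential derivatives of the trace against one normal derivative of the bulk, which precisely matches what the Nirenberg step and the algebraic ellipticity step produce at the previous level. The nonlocal terms $e_4\!\int b\hat p\,d\tau$ and $e_5\,i^*\hat p$ do not disturb this bookkeeping, because the first is one normal derivative smoother than $\hat p$ and the second depends only on tangential variables.
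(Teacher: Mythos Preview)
Your approach is correct and genuinely different from the paper's. The paper does not carry out a Sobolev bootstrap at all: it simply observes that, by Sobolev embedding in three dimensions, an $H^2$ solution is already H\"older continuous, so the nonlocal terms $e_3\hat p+e_4\!\int_{r_b}^{y^0} b\,\hat p\,d\tau+e_5\,i^*\hat p$ can be thrown onto the right-hand side as a $C^\alpha$ forcing, after which standard Schauder theory (together with the Venttsel Schauder estimates of Luo--Trudinger already invoked later in the section) bootstraps directly in H\"older spaces to $C^\infty$. The argument is essentially a one-line citation to \cite{GT2001} and the authors' earlier paper \cite{LiuXuYuan2016}.

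Your route---tangential difference quotients exploiting the $y'$-translation invariance of $\mathfrak L$ and of both boundary operators, algebraic recovery of normal derivatives from the nonvanishing of $e_1$, and a separate elliptic gain on $\T^2$ for the trace via the Venttsel relation---is more self-contained and avoids appealing to the Luo--Trudinger H\"older theory. The price is that your difference-quotient step tacitly relies on an $H^2$ a~priori estimate of the form $\|v\|_{H^2}+\|i^*v\|_{H^2}\le C\|v\|_{L^2}$ for solutions of the homogeneous problem; this is exactly \eqref{eq415} (with $f=h_0=h_1=0$), which the paper establishes independently of the present lemma, so there is no circularity. Conversely, the paper's Schauder route is shorter but less transparent about how the Venttsel condition and the nonlocal integral actually feed into the regularity gain; your argument makes that mechanism explicit.
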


\begin{proof} The argument is the same as that appeared in \cite[Remark 5.5, p.735]{LiuXuYuan2016}, except that we consider now $e_3(y^0)\hat{p}+e_4(y^0)\int_{r_b}^{y^0}b(\mu,\tau) \hat{p}(\tau, {y}')\,\dd \tau+e_5(y^0)i^*(\hat{p})$  as  a nonhomogeneous term and apply the Schauder theory in \cite{GT2001}, by noting that the term $e_4(y^0)\int_{r_b}^{y^0}b(\mu,\tau) \hat{p}(\tau, {y}')\,\dd \tau$ still belongs to $C^\alpha(\overline{\mathcal{M}}\setminus\mathcal{M}_0)$ for any $\alpha\in(0,1/2)$, thanks to Sobolev embedding theorem.
\end{proof}

We wish to find some conditions to guarantee that $\hat{p}\equiv0$. The idea is to use the method of separation of variables via the Fourier series. Similar to \cite{YuanZhao},
denote $m=(m_1,m_2)$, then by the above lemma, we could  write
\begin{align}\label{eq404}
\hat{p}(y)=&\sum_{m_1,m_2=0}^{\infty}\lambda_{m}\left\{ p_{1,m}(y^0)\cos(m_1y^1)\cos(m_2y^2)+p_{2,m}(y^0)\sin(m_1y^1)\cos(m_2y^2)\right.\nonumber\\
&\qquad\left.+p_{3,m}(y^0)\cos(m_1y^1)\sin(m_2y^2)+p_{4,m}(y^0)\sin(m_1y^1)\sin(m_2y^2)\right\},
\end{align}
where
\begin{eqnarray*}
\lambda_{m}=
\begin{cases}
\frac{1}{4}& \text{if}\quad m_1=m_2=0,\\[3pt]
\frac{1}{2} &\text{if only one of}\ \ m_1,\, m_2\ \ \text{is}\ \ 0,\\[3pt]
1 &\text{if}\quad m_1>0,m_2>0.
\end{cases}
\end{eqnarray*}
For $y=(y^0, y^1,y^2)$, the coefficients in \eqref{eq404} are given by
\begin{align*}
p_{1,m}(y^0)=&\frac{1}{\pi^2}\int_{\T^2}\hat{p}(y)\cos(m_1y^1)\cos(m_2y^2)\,\dd y^1\dd y^2,\\
p_{2,m}(y^0)=&\frac{1}{\pi^2}\int_{\T^2}\hat{p}(y)\sin(m_1y^1)\cos(m_2y^2)\,\dd y^1\dd y^2,\\
p_{3,m}(y^0)=&\frac{1}{\pi^2}\int_{\T^2}\hat{p}(y)\cos(m_1y^1)\sin(m_2y^2)\,\dd y^1\dd y^2,\\
p_{4,m}(y^0)=&\frac{1}{\pi^2}\int_{\T^2}\hat{p}(y)\sin(m_1y^1)\sin(m_2y^2)\,\dd y^1\dd y^2.
\end{align*}
If $\hat{p}\in C^{k,\alpha}(\overline{\mathcal{M}})$ and $k\ge 2$, we easily deduce that  $p_{i,m}(y^0)$ $(i=1,2,3,4)$ belongs to $C^{k,\alpha}([r_b, L])$ for all $k$, and the Fourier series \eqref{eq404} converges uniformly in $C^{k-2}(\overline{\mathcal{M}})$.

Substituting \eqref{eq404} into \eqref{eq402},   for $y^0\in[r_b, L]$,  each $p_{i,m}(y^0)$ solves the following nonlocal ordinary differential equation
\begin{eqnarray}\label{eq405}
&&e_1(y^0)p_{i,m}''+e_2(y^0)p_{i,m}'+(e_3(y^0)
+|m|^2)p_{i,m}\nonumber\\
&&\qquad+e_4(y^0)\int_{r_b}^{y^0}{b}(\mu,\tau) p_{i,m}(\tau)\,\dd \tau+e_5(y^0)p_{i,m}(r_b)=0,
\end{eqnarray}
subjected to the two-point boundary conditions:
\begin{eqnarray}\label{eq406}
p_{i,m}'(r_b)+\frac{\mu_7-|m|^2}{\mu_8}p_{i,m}(r_b)=0 ,\qquad p_{i,m}(L)=0.
\end{eqnarray}
We need to find sufficient conditions so  that all $p_{i,m}\,  ( i=1,2,3,4)$ are zero.

Supposing that $p_{i,m}(r_b)=0,$  we set $\mathcal{P}_{i,m}(y^0)=\int_{r_b}^{y^0}{b}(\mu,\tau) p_{i,m}(\tau)\,\dd \tau$. Then problem \eqref{eq405} and \eqref{eq406} can be written as
\begin{eqnarray}\label{eq407}
\begin{cases}
\tilde{e}_1(y^0)\mathcal{P}_{i,m}'''+\tilde{e}_2(y^0)\mathcal{P}''_{i,m}+\tilde{e}_3(y^0)\mathcal{P}'_{i,m}
+e_4(y^0)\mathcal{P}_{i,m}=0,\quad y^0\in[r_b, L],\\[3pt]
\mathcal{P}_{i,m}(r_b)=\mathcal{P}_{i,m}'(r_b)=\mathcal{P}_{i,m}''(r_b)=0,\\[3pt]
\mathcal{P}_{i,m}'(L)=0.
\end{cases}
\end{eqnarray}
Here we define
\begin{align*}
\tilde{e}_1(y^0)=&\frac{e_1(y^0)}{{b}(\mu,y^0)}<0,\\
\tilde{e}_2(y^0)=&\left(\frac{e_2(y^0)}{{b}(\mu,y^0)}-\frac{2e_1(y^0){b}'(\mu,y^0)}{{b}^2(\mu,y^0)}
\right),\\
\tilde{e}_3(y^0)=&\left(\frac{(e_3(y^0)+|m|^2)}{{b}(\mu,y^0)}-\frac{e_2(y^0){b}'(\mu,y^0)
+e_1(y^0){b}''(\mu,y^0)}{{b}^2(\mu,y^0)}
+\frac{2e_1(y^0)({b}'(\mu,y^0))^2}{{b}^3(\mu,y^0)}
\right),
\end{align*}
and $b'(\mu,y_0)=\p b(\mu,y_0)/\p y^0, b''(\mu,y_0)=\p^2 b(\mu,y_0)/{(\p y^0)}^2$.
By uniqueness of solutions of Cauchy problems of ordinary differential equations, obviously one has that $p_{i,m}\equiv 0$ in $[r_b, L]$.

If $p_{i,m}(r_b)\ne0$ for some $i, m$, we set $w_{i,m}(y^0)=\frac{p_{i,m}(y^0)}{p_{i,m}(r_b)}$ and $\mathcal{W}_{i,m}(y^0)=\int_{r_b}^{y^0}{b}(\mu,\tau) w_{i,m}(\tau)\,\dd \tau$. Then it solves
\begin{eqnarray}\label{eq408}
\begin{cases}
\tilde{e}_1(y^0)\mathcal{W}_{i,m}'''+\tilde{e}_2(y^0)\mathcal{W}''_{i,m}
+\tilde{e}_3(y^0)\mathcal{W}'_{i,m}+e_4(y^0)\mathcal{W}_{i,m}+e_5(y^0)=0,\quad y^0\in[r_b, L],\\[3pt]
\mathcal{W}_{i,m}(r_b)=0,\quad \mathcal{W}_{i,m}'(r_b)=b(\mu,r_b),
\quad \mathcal{W}_{i,m}''(r_b)=b'(\mu,r_b)-\frac{\mu_7-|m|^2}{\mu_8}b(\mu,r_b),\\[3pt]
\mathcal{W}_{i,m}'(L)=0.
\end{cases}
\end{eqnarray}

\begin{definition}\label{def401}
We say a background solution $U_b$ satisfies the {\it S-Condition}, if for each $i=1,2,3,4$ and $m\in\Z^2$,  problem \eqref{eq408} does {\it not} have a classical solution.
\end{definition}

If the background solution $U_b$ satisfies the S-Condition, then all $p_{i,m}$ are zero, hence problem \eqref{eq402} has only the trivial solution. Recall that a background  solution is determined analytically by the following parameters: $L>0, \gamma>1, \mu>0, r_b\in(0, L), U_b^-(0)$. Our purpose below is to show theoretically that almost all background solutions satisfy the S-Condition. Actually, we have the following lemma.

\begin{lemma}\label{lem401}
Given $U_b^-(0)$ and $\gamma>1, \mu>0, L>0$. Suppose that the Mach number of the background solution satisfies $t(y^0)<t_0, y^0\in(r_b, L)$, for a constant $t_0$ depending on $\gamma$. Then there exists a set $\mathcal{S}\subset(0, L)$ of at most countable infinite points such that the background solutions $U_b$ determined by $r_b\in(0, L)\setminus \mathcal{S}$ satisfy the S-Condition.
\end{lemma}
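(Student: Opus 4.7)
The plan is to recast the S-Condition as the non-vanishing of a family of real-analytic functions of the shock location $r_b$, and then invoke the identity principle. For each $m\in\mathbb{Z}^2$ and each $r_b\in(0,L)$, observe that the initial data in \eqref{eq408} at $y^0=r_b$ do not depend on the index $i$, so the problem reduces to a Cauchy problem for a single linear third-order ODE with three initial conditions at $y^0=r_b$, possessing a unique solution $\mathcal{W}_m(\,\cdot\,;r_b)$. The S-Condition is violated for this $m$ precisely when the leftover terminal condition holds, i.e. iff $f_m(r_b):=\mathcal{W}_m'(L;r_b)=0$. It therefore suffices to prove that
\[
\mathcal{S}:=\bigcup_{m\in\mathbb{Z}^2}\{r_b\in(0,L):f_m(r_b)=0\}
\]
is at most countable.

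To show each $f_m$ is real-analytic in $r_b$ on $(0,L)$, I would appeal to Remark \ref{rem11new}: the subsonic background $U_b^+$ depends analytically on $r_b$ and extends analytically to an open set containing $[r_b,L]$, so the coefficients $\tilde e_1,\tilde e_2,\tilde e_3,e_4,e_5$ and the weight $b(\mu,\cdot)$ in \eqref{eq408}, together with the initial data, are jointly analytic in $(y^0,r_b)$. The hypothesis $t(y^0)<t_0<1$ keeps $\tilde e_1$ uniformly bounded away from $0$, so the Cauchy problem stays non-degenerate. To handle the variable domain $[r_b,L]$, I would rescale via $\sigma=(y^0-r_b)/(L-r_b)$ to the fixed interval $[0,1]$, at which point the classical theorem on analytic dependence of solutions of ODEs on parameters immediately gives that $r_b\mapsto \mathcal{W}_m(\,\cdot\,;r_b)$, and therefore $f_m$, is real-analytic on $(0,L)$.

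Next I would verify $f_m\not\equiv 0$ by examining the limit $r_b\uparrow L$. As the integration interval collapses, continuous dependence in the left endpoint yields
\[
f_m(r_b)=\mathcal{W}_m'(L;r_b)\longrightarrow \mathcal{W}_m'(r_b;r_b)=b(\mu,r_b)\longrightarrow \frac{e^{2\mu L}}{\rho_b(L)}>0,
\]
so $f_m$ is non-trivial on $(0,L)$. By the identity principle the zero set of each $f_m$ is discrete in $(0,L)$ and hence at most countable, so the countable union $\mathcal{S}$ over $m\in\mathbb{Z}^2$ is again at most countable, and any $r_b\in(0,L)\setminus\mathcal{S}$ satisfies the S-Condition. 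The main obstacle I anticipate is the joint analyticity/non-degeneracy step: both the coefficients and the domain vary with $r_b$, and the rescaled coefficients must remain uniformly non-singular on $[0,1]$, which is exactly where the threshold $t_0$ enters to prevent $\tilde e_1$ from vanishing.
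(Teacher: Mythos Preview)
Your approach is correct and shares the paper's overall strategy: show the shooting function $f_m(r_b)$ is real-analytic in $r_b$ and non-trivial near $r_b=L$, then invoke the identity principle. The execution differs in one useful way. The paper first passes from $\mathcal W$ to $\widetilde{\mathcal W}=\mathcal W'$ by dividing \eqref{eq408} through by $e_4$ and differentiating, then rescales to $[0,1]$ and uses the analytic continuation of Remark~\ref{rem11new} to evaluate directly at $r_b=L$, obtaining $\widehat{\mathcal W}(1;L)=b(\mu,L)>0$; this division step is precisely why the paper needs $e_4\ne0$, i.e.\ the hypothesis $t<t_0$ (it is the numerator of $d_3(t)$, not $\tilde e_1$, that vanishes at $t_0$). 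Your shrinking-interval argument $\mathcal W_m'(L;r_b)\to\mathcal W_m'(r_b;r_b)=b(\mu,r_b)\to b(\mu,L)>0$ reaches the same non-triviality conclusion without dividing by $e_4$ and without extending to $r_b=L$ itself, so in fact your argument never uses the $t_0$ hypothesis. Your stated reason for needing $t_0$ is therefore mistaken: $\tilde e_1=(t-1)/b$ is already nonzero for any subsonic background ($t<1$), independently of $t_0$.
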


\begin{proof}
The idea of proof is similar to the previous work  \cite{ChenYuan2013,LiuXuYuan2016}. Note that there exists a constant $t_0$  depending only on $\gamma$ such that if the Mach number of the background solution $t(y^0)\ne t_0, y^0\in(r_b, L)$, then $e_4(y^0)\neq 0$.
Let $\widetilde{\mathcal{W}}_{i,m}=\mathcal{W}_{i,m}'(y^0)$. Firstly dividing by $e_4(y_0)$ in \eqref{eq408},  and then taking derivative of the resulting equation,  we get, after multiplying $e_4(y^0)$, that
\begin{eqnarray}\label{eq408-1}
\begin{cases}
\tilde{e}_1(y^0)\widetilde{\mathcal{W}}_{i,m}'''+\big(\tilde{e}_2(y^0)+\tilde{\tilde{e}}_1(y^0)\big)
\widetilde{\mathcal{W}}''_{i,m}
+\big(\tilde{e}_3(y^0)+\tilde{\tilde{e}}_2(y^0)\big)\widetilde{\mathcal{W}}'_{i,m}\\[3pt]
\hspace{14.7em}+\big(e_4(y^0)+\tilde{\tilde{e}}_3(y^0)\big)\widetilde{\mathcal{W}}_{i,m}
+\widetilde{e}_5(y^0)=0,\quad y^0\in[r_b, L],\\[3pt]
\widetilde{\mathcal{W}}_{i,m}(r_b)=b(\mu,r_b),\quad \widetilde{\mathcal{W}}_{i,m}'(r_b)=b'(\mu,r_b)-\frac{\mu_7-|m|^2}{\mu_8}b(\mu,r_b),\\[3pt]
\widetilde{\mathcal{W}}_{i,m}''(r_b)=-\frac{1}{\tilde{e}_1(r_b)}\Big(\tilde{e}_2(r_b)
\big(b'(\mu,r_b)-\frac{\mu_7-|m|^2}{\mu_8}b(\mu,r_b)\big)
+\tilde{e}_3(r_b)b(\mu,r_b)+e_5(r_b)\Big),\\[3pt]
\widetilde{\mathcal{W}}_{i,m}(L)=0.
\end{cases}
\end{eqnarray}
where
\begin{align*}
\tilde{\tilde{e}}_1(y^0)=&e_4(y^0)\left(\frac{\tilde{e}_1(y^0)}{e_4(y^0)}\right)',\quad
\tilde{\tilde{e}}_2(y^0)=e_4(y^0)\left(\frac{\tilde{e}_2(y^0)}{e_4(y^0)}\right)',\\
\tilde{\tilde{e}}_3(y^0)=&e_4(y^0)\left(\frac{\tilde{e}_3(y^0)}{e_4(y^0)}\right)',\quad
\tilde{e}_5(y^0)=e_4(y^0)\left(\frac{e_5(y^0)}{e_4(y^0)}\right)'.
\end{align*}
Now we change the variable $y^0$ to $z$ given by $z=\frac{y^0-r_b}{L-r_b}$, and $\tilde{z}=r_b+(L-r_b)z$. Then by multiplying suitable powers of $L-r_b$, the above problem \eqref{eq408-1} becomes
\begin{eqnarray}\label{eq408-2}
\begin{cases}
\tilde{e}_1(\tilde{z})\widehat{\mathcal{W}}_{i,m}'''+(L-r_b)
\big(\tilde{e}_2(\tilde{z})+\tilde{\tilde{e}}_1(\tilde{z})\big)\widehat{\mathcal{W}}''_{i,m}
+(L-r_b)^2\big(\tilde{e}_3(\tilde{z})+\tilde{\tilde{e}}_2(\tilde{z})\big)\widehat{\mathcal{W}}'_{i,m}\\[3pt]
\hspace{10.7em}+(L-r_b)^3\big(e_4(\tilde{z})+\tilde{\tilde{e}}_3(\tilde{z})\big)\widehat{\mathcal{W}}_{i,m}
+(L-r_b)^3\widetilde{e}_5(\tilde{z})=0,\quad z\in[0,1],\\[3pt]
\widehat{\mathcal{W}}_{i,m}(0)=b(\mu,r_b),\quad \widehat{\mathcal{W}}_{i,m}'(0)=(L-r_b)\Big(b'(\mu,r_b)-\frac{\mu_7-|m|^2}{\mu_8}b(\mu,r_b)\Big),\\[3pt]
\widehat{\mathcal{W}}_{i,m}''(0)=-\frac{(L-r_b)^2}{\tilde{e}_1(r_b)}\Big(\tilde{e}_2(r_b)
\big(b'(\mu,r_b)-\frac{\mu_7-|m|^2}{\mu_8}b(\mu,r_b)\big)
+\tilde{e}_3(r_b)b(\mu,r_b)+e_5(r_b)\Big),\\[3pt]
\widehat{\mathcal{W}}_{i,m}(1)=0.
\end{cases}
\end{eqnarray}
Here we have set $\widehat{\mathcal{W}}_{i,m}(z)=\widetilde{\mathcal{W}}_{i,m}(r_b+z(L-r_b))$.

We recall that the background solution $U_b$, and all the coefficients $e_1, e_2, e_3, e_4$, as well as $b$,  depend analytically on $r_b$. Hence the unique solution $\widehat{\mathcal{W}}_{i,m}$ to this Cauchy problem \eqref{eq408-2} (exclude the last condition) is also real analytic with respect to the parameter $r_b$ (cf. \cite{Walter1998}). We write it as $\widehat{\mathcal{W}}_{i,m}=\widehat{\mathcal{W}}_{i,m}(z; r_b)$. Particularly, $\vartheta_{i,m}(r_b)\triangleq \widehat{\mathcal{W}}_{i,m}(1; r_b)$ is continuous for $r_b\in[0, L],$ and real analytic for $r_b\in[0, L)$.

It is crucial to note that there is an  analytical continuation of   $\vartheta_{i,m}(r_b)$ up to the point $r_b=L$.  This follows from the observation in Remark \ref{rem11new}, by which all the coefficients, nonhomogeneous terms, and boundary conditions in \eqref{eq408-2} make sense for $r_b>L$ (but close to $L$), and are actually analytic for $r_b$ in a neighborhood of $L$.

For given $i=1,2,3,4, m\in\Z^2$, suppose now there are infinite numbers of  $r_b$ so that $\vartheta_{i,m}(r_b)=0$. Then by compactness of $[0, L]$, the function $\vartheta_{i,m}$, as an analytic function, has a non-isolated zero point. So it must be identically zero and we have $\vartheta_{i,m}(L)=0.$ However, for $r_b=L$, problem \eqref{eq408-2} is reduced to
\begin{eqnarray}\label{eq409}
\begin{cases}
\widehat{\mathcal{W}}_{i,m}'''(z)=0,\quad z\in[0,1],\\[3pt]
\widehat{\mathcal{W}}_{i,m}(0)=b(\mu,L),\quad \widehat{\mathcal{W}}_{i,m}'(0)=0,\quad
\widehat{\mathcal{W}}_{i,m}''(0)=0.
\end{cases}
\end{eqnarray}
Hence $\widehat{\mathcal{W}}_{i,m}(1; L)=b(\mu,L)$, namely $\vartheta_{i,m}(L)=b(\mu,L)=e^{2\mu L}/\rho_b(L)>0$, contradicts to our conclusion that $\vartheta_{i,m}(L)=0$. So for each fixed $m\in\Z^2$, there are at most finite numbers of zeros of $\vartheta_{i,m}$. Therefore, there are at most countable infinite numbers of $r_b$ so that the problem \eqref{eq408} may have a solution. The conclusion of the lemma then follows.
\end{proof}

\subsection{Uniform a priori estimate in Sobolev spaces}
Suppose now that $\hat{p}\in H^2(\mathcal{M})$ with $i^*(\hat{p})\in H^2(\mathcal{M}_0)$. Obviously our assumptions on problem \eqref{eq401} guarantee that $f\in L^2(\mathcal{M})$, $h_1\in H^2(\T^2)$, and $h_0\in L^2(\T^2)$.  Then by Trace Theorem and Interpolation Inequalities of Sobolev functions, we have
\begin{eqnarray*}
\norm{i^*(\hat{p})}_{L^2(\T^2)}+\norm{i^*(\p_0\hat{p})}_{L^2(\T^2)}
\le\varepsilon\norm{\hat{p}}_{H^2(\mathcal{M})}+C(\varepsilon)\norm{\hat{p}}_{L^2(\mathcal{M})},
\quad \forall \ \varepsilon\in(0,1).
\end{eqnarray*}
Applying Theorem 8.12 in \cite[p.186]{GT2001}  to the boundary equation in \eqref{eq401}, we have
\begin{align*}
\norm{i^*(\hat{p})}_{H^2(\T^2)}\le& C\Big(\norm{i^*(\hat{p})}_{L^2(\T^2)}+\norm{i^*(\p_0\hat{p})}_{L^2(\T^2)}+\norm{h_0}_{L^2(\T^2)}\Big)
\nonumber\\
\le&C\varepsilon\norm{\hat{p}}_{H^2(\mathcal{M})}
+C'(\varepsilon)\norm{\hat{p}}_{L^2(\mathcal{M})}+C\norm{h_0}_{L^2(\T^2)}.
\end{align*}
By considering the nonlocal terms
\[
e_4(y^0)\int_{r_b}^{y^0}b(\mu,\tau) \hat{p}(\tau, {y}')\,\dd \tau\quad \text{and} \quad e_5(y^0)i^*(\hat{p})
\]
in $\mathfrak{L}(\hat{p})$  as part of the non-homogenous term,  and
using the same theorem to problem \eqref{eq401}, with given Dirichlet data $i^*\hat{p}$,  it follows that
\begin{align*}
\norm{\hat{p}}_{H^2(\mathcal{M})}&\le C\Big(\norm{\hat{p}}_{L^2(\mathcal{M})}+\norm{i^*(\hat{p})}_{L^2(\T^2)}
+\norm{f}_{L^2(\mathcal{M})}+\norm{h_1}_{H^2(\T^2)}\Big)\nonumber\\
&\le C\varepsilon\norm{\hat{p}}_{H^2(\mathcal{M})}+C'(\varepsilon)\norm{\hat{p}}_{L^2(\mathcal{M})}+
C\Big(\norm{h_1}_{H^2(\T^2)}+\norm{f}_{L^2(\mathcal{M})}\Big).
\end{align*}
Taking $\varepsilon=1/(4C)$, we get
\begin{eqnarray}\label{eq415}
\norm{\hat{p}}_{H^2(\mathcal{M})}+\norm{i^*(\hat{p})}_{H^2(\T^2)}\le
C\Big(\norm{\hat{p}}_{L^2(\mathcal{M})}+\norm{h_0}_{L^2(\T^2)}+\norm{h_1}_{H^2(\T^2)}
+\norm{f}_{L^2(\mathcal{M})}\Big).
\end{eqnarray}
Then, by \eqref{eq415} and a compactness argument as in \cite[p.738]{LiuXuYuan2016},  we deduce the {\it a priori} estimate
\begin{eqnarray}\label{eq416}
\norm{\hat{p}}_{H^2(\mathcal{M})}+\norm{i^*(\hat{p})}_{H^2(\T^2)}\le
C\Big(\norm{h_0}_{L^2(\T^2)}+\norm{h_1}_{H^2(\T^2)}+\norm{f}_{L^2(\mathcal{M})}\Big),
\end{eqnarray}
provided that the S-Condition holds. Here the constant $C$ depends only on the background solution.

\subsection{Uniform a priori estimate in H\"{o}lder spaces}
By considering the nonlocal terms
\[
e_4(y^0)\int_{r_b}^{y^0}b(\mu,\tau) \hat{p}(\tau, {y}')\,\dd \tau \ \ \text{and}\ \  e_5(y^0)i^*(\hat{p})
\]
in $\mathfrak{L}(\hat{p})$  as part of the non-homogenous term,  and
applying Theorem 1.5 in \cite[p.198]{LN1991} for the Venttsel problem (note that $\mu_8>0$) and Theorem 6.6 in \cite{GT2001} for the Dirichlet problem, with the aid of a standard higher regularity
argument as in Theorem 6.19 of \cite{GT2001}, and interpolation inequalities (Lemma 6.35 in \cite[p.135]{GT2001}),  we infer that  any $\hat{p}\in C^{k,\alpha}(\overline{\mathcal{M}})$ ($k=2,3$) solves problem \eqref{eq401} should satisfy the estimate
\begin{eqnarray}\label{eq410}
\norm{\hat{p}}_{C^{k,\alpha}(\overline{\mathcal{M}})}\le
C\Big(\norm{\hat{p}}_{C^0(\overline{\mathcal{M}})}+\norm{h_0}_{C^{k-2,\alpha}(\T^2)}
+\norm{h_1}_{C^{k,\alpha}(\T^2)}+\norm{f}_{C^{k-2,\alpha}(\overline{\mathcal{M}})}
\Big),
\end{eqnarray}
with  $C$ a constant depending only on the background solution $U_b$ and $L, \alpha$.
Then by an argument similar to the proof of \eqref{eq416}, we have the {\it a priori} estimate:
\begin{eqnarray}\label{eq411}
\norm{\hat{p}}_{C^{k,\alpha}(\overline{\mathcal{M}})}\le
C\Big(\norm{h_0}_{C^{k-2,\alpha}(\T^2)}+\norm{h_1}_{C^{k,\alpha}(\T^2)}
+\norm{f}_{C^{k-2,\alpha}(\overline{\mathcal{M}})}\Big)
\end{eqnarray}
for any $C^{k,\alpha}$ solution of problem \eqref{eq401}, provided that the only solution to problem \eqref{eq402} is zero.

\subsection{Approximate solutions}
We now use Fourier series to establish a family of approximate solutions to problem \eqref{eq401}.

Without loss of generality, we take $h_1=0$  in the sequel. We also set $\{f^{(n)}\}_n$ to be a sequence of $C^\infty(\overline{\mathcal{M}})$ functions that converges to $f$ in $C^{k-2,\alpha}(\overline{\mathcal{M}})$, and $\{h^{(n)}_0\}_n\subset C^\infty(\T^2)$ converges to $h_0$ in $C^{k-2,\alpha}(\T^2)$. Now for fixed $n$, we consider problem \eqref{eq401}, with $f$ there replaced by $f^{(n)}$, and $h_0$ replaced by $h^{(n)}_0$.

Suppose that
\begin{align}
f^{(n)}(y)=&\sum_{m_1,m_2=0}^{\infty}\lambda_{m}\left\{ f^{(n)}_{1,m}(y^0)\cos(m_1y^1)\cos(m_2y^2)+f^{(n)}_{2,m}(y^0)\sin(m_1y^1)
\cos(m_2y^2)\right.\nonumber\\
&\left.+f^{(n)}_{3,m}(y^0)\cos(m_1y^1)\sin(m_2y^2)+f^{(n)}_{4,m}(y^0)\sin(m_1y^1)
\sin(m_2y^2)\right\},\label{eq417}\\
h^{(n)}_0(y')=&\sum_{m_1,m_2=0}^{\infty}\lambda_{m}\left\{ (h_0^{(n)})_{1,m}\cos(m_1y^1)\cos(m_2y^2)+(h_0^{(n)})_{2,m}\sin(m_1y^1)
\cos(m_2y^2)\right.\nonumber\\
&\left.+(h_0^{(n)})_{3,m}\cos(m_1y^1)\sin(m_2y^2)+(h_0^{(n)})_{4,m}\sin(m_1y^1)
\sin(m_2y^2)\right\}.\label{eq418}
\end{align}
Then for $\hat{p}$ given by \eqref{eq404}, each $\mathcal{P}_{i,m}(y^0), i=1,2,3,4$  should solve the following two-point boundary value problem of a third-order ordinary differential equation containing a nonlocal term:
\begin{eqnarray}\label{eq419}
\begin{cases}
L_{i,m}(\mathcal{P}_{i,m})\triangleq\tilde{e}_1(y^0)\mathcal{P}_{i,m}'''+\tilde{e}_2(y^0)\mathcal{P}''_{i,m}
+\tilde{e}_3(y^0)\mathcal{P}'_{i,m}+e_4(y^0)\mathcal{P}_{i,m}\\[3pt]
\hspace{4.7em}=-\frac{e_5(y^0)}{b(\mu,r_b)}\mathcal{P}'_{i,m}(r_b)+{f}^{(n)}_{i,m}(y^0)\qquad y^0\in[r_b, L],\\[3pt]
\mathcal{P}_{i,m}(r_b)=0,
\quad \mathcal{P}_{i,m}''(r_b)
+\left(\frac{\mu_7-|m|^2}{\mu_8}-\frac{b'(\mu,r_b)}{b(\mu,r_b)}\right)\mathcal{P}_{i,m}'(r_b)
=\frac{b(\mu,r_b)}{\mu_8}(h_0^{(n)})_{i,m},\\[3pt]
\mathcal{P}_{i,m}'(L)=0.
\end{cases}
\end{eqnarray}
We will show that this problem is uniquely solvable.

Let $\mathcal{P}^1_{i,m}, \mathcal{P}^2_{i,m}$ and $ \mathcal{P}^\flat_{i,m}$  be respectively the unique solutions of the following three linear Cauchy problems:
\begin{align*}
&L_{i,m}(\mathcal{P}_{i,m})=-\frac{e_5(y^0)}{b(\mu,r_b)},\quad \mathcal{P}_{i,m}(r_b)=0,\quad \mathcal{P}'_{i,m}(r_b)=1,\quad \mathcal{P}_{i,m}''(r_b)=0;\\
&L_{i,m}(\mathcal{P}_{i,m})=0,\quad \mathcal{P}_{i,m}(r_b)=0,\quad \mathcal{P}'_{i,m}(r_b)=0,\quad \mathcal{P}_{i,m}''(r_b)=1;\\
&L_{i,m}(\mathcal{P}_{i,m})={f}^{(n)}_{i,m}(y^0),\quad \mathcal{P}_{i,m}(r_b)=0,\quad \mathcal{P}'_{i,m}(r_b)=0,\quad \mathcal{P}_{i,m}''(r_b)=0.
\end{align*}
For any real numbers $c_1, c_2$,
$$\mathcal{P}_{i,m}=c_1\mathcal{P}^1_{i,m}+c_2\mathcal{P}^2_{i,m}+\mathcal{P}^\flat_{i,m}$$
solves the Cauchy problem
\begin{align*}
L_{i,m}(\mathcal{P}_{i,m})=-c_1\frac{e_5(y^0)}{b(\mu,r_b)}+{f}^{(n)}_{i,m}(y^0),
\quad \mathcal{P}_{i,m}(r_b)=0,\quad \mathcal{P}'_{i,m}(r_b)=c_1,\quad \mathcal{P}_{i,m}''(r_b)=c_2.
\end{align*}

Therefore, to solve problem \eqref{eq419}, there shall exist $c_1, c_2$ to solve the following linear algebraic equations:
\begin{align*}
c_2+\left(\frac{\mu_7-|m|^2}{\mu_8}-\frac{b'(\mu,r_b)}{b(\mu,r_b)}\right)c_1
&=\frac{b(\mu,r_b)}{\mu_8}(h_0^{(n)})_{i,m},\\
(\mathcal{P}^2_{i,m})'(L)c_2+(\mathcal{P}^1_{i,m})'(L)c_1&=-(\mathcal{P}^\flat_{i,m})'(L).
\end{align*}
In fact, we know that, under the S-Condition, the homogeneous system has only the trivial solution. So by Fredholm alternative  of linear algebraic equations, there is one and only one pair $(c_1, c_2)$ solves the above linear system, which enables us to get the unique solution to problem \eqref{eq419}.

Note that ${f}^{(n)}_{i,m}\in C^{\infty}([r_b, L])$ as  $f^{(n)}\in C^{\infty}(\overline{\mathcal{M}})$, and the coefficients in \eqref{eq419} are all real analytic, so the solution $p_{i,m}(y^0)=\frac{1}{b(\mu,y^0)}\mathcal{P}_{i,m}'(y^0)$ belongs to  $C^{\infty}([r_b, L]).$

Now for $N\in\mathbb{N}$, we define
\begin{align*}
\hat{p}_N(y)=&\sum_{m_1,m_2=0}^{N}\lambda_{m}\left\{ p_{1,m}(y^0)\cos(m_1y^1)\cos(m_2y^2)+p_{2,m}(y^0)\sin(m_1y^1)\cos(m_2y^2)\right.\\
&\left.+p_{3,m}(y^0)\cos(m_1y^1)\sin(m_2y^2)+p_{4,m}(y^0)\sin(m_1y^1)\sin(m_2y^2)\right\},\\
f^{(n)}_N(y)=&\sum_{m_1,m_2=0}^{N}\lambda_{m}\left\{ f^{(n)}_{1,m}(y^0)\cos(m_1y^1)\cos(m_2y^2)+f^{(n)}_{2,m}(y^0)\sin(m_1y^1)
\cos(m_2y^2)\right.\\
&\left.+f^{(n)}_{3,m}(y^0)\cos(m_1y^1)\sin(m_2y^2)+f^{(n)}_{4,m}(y^0)\sin(m_1y^1)
\sin(m_2y^2)\right\},\\
(h^{(n)}_0)_N(y')=&\sum_{m_1,m_2=0}^{N}\lambda_{m}\left\{ (h_0^{(n)})_{1,m}\cos(m_1y^1)\cos(m_2y^2)+(h_0^{(n)})_{2,m}\sin(m_1y^1)
\cos(m_2y^2)\right.\\
&\left.+(h_0^{(n)})_{3,m}\cos(m_1y^1)\sin(m_2y^2)+(h_0^{(n)})_{4,m}\sin(m_1y^1)
\sin(m_2y^2)\right\}.
\end{align*}
Apparently  $\hat{p}_{N}, \ f^{(n)}_N\in C^{\infty}(\overline{\mathcal{M}})$, and $(h_0^{(n)})_{N}\in C^\infty(\T^2)$. It is also easy to check that $\hat{p}_{N}$ solves the following problem:
\begin{eqnarray}\label{eq423}
\begin{cases}
\mathfrak{L}(\hat{p}_N)=f^{(n)}_N& \text{in}\quad \mathcal{M},\\[3pt]
\hat{p}_N=0 &\text{on}\quad \mathcal{M}_1,\\[3pt]
\Delta'(i^*\hat{p}_N)+\mu_7(i^*{\hat{p}_N})+\mu_8(i^*\p_0\hat{p}_N)=(h_0^{(n)})_N &\text{on}\quad \mathcal{M}_0.
\end{cases}
\end{eqnarray}

\subsection{Existence}\label{sec35}
By the estimate \eqref{eq416}, for any $N_1,N_2\in\mathbb{N}$ with $N_1<N_2$, there holds
\begin{align*}
&\norm{\hat{p}_{N_2}-\hat{p}_{N_1}}_{H^2(\mathcal{M})}
+\norm{i^*(\hat{p}_{N_2}-\hat{p}_{N_1})}_{H^2(\mathcal{M}_0)}\nonumber\\
\le& C\Big(\norm{f^{(n)}_{N_2}-f^{(n)}_{N_1}}_{L^2(\mathcal{M})}
+\norm{(h_0^{(n)})_{N_2}-(h_0^{(n)})_{N_1}}_{L^2(\T^2)}\Big).
\end{align*}
Recall that $f^{(n)}_N\to f^{(n)}$ in $L^2(\mathcal{M})$ and $(h_0^{(n)})_N\to h_0^{(n)}$ in $L^2(\T^2)$ as $N\to\infty$, we infer that  $\{\hat{p}_N\}$  (respectively $i^*{\hat{p}_N}$) is a Cauchy sequence in $H^2(\mathcal{M})$ (respectively $H^2(\mathcal{M}_0)$). So there is a $\hat{p}^{(n)}\in H^2(\mathcal{M})$ (respectively $q^{(n)}\in H^2(\mathcal{M}_0)$) and $\hat{p}_N\to \hat{p}^{(n)}$ in $H^2(\mathcal{M})$ (respectively $i^*\hat{p}_N\to q^{(n)}$ in $H^2(\mathcal{M}_0)$)  as $N\to \infty$. By continuity of trace operator, we conclude that $q^{(n)}=i^*\hat{p}^{(n)}$. Taking the limit $N\to\infty$ in problem  \eqref{eq423}, one sees that $\hat{p}^{(n)}\in H^2(\mathcal{M})$, with $i^*\hat{p}^{(n)}\in H^2(\mathcal{M}_0)$, is a strong solution to problem \eqref{eq401}, where $f$ is replaced by $f^{(n)}$, and $h_0$ replaced by $h_0^{(n)}$. Then by the same arguments as in Lemma \ref{lem401}, $\hat{p}^{(n)}\in C^{\infty}(\overline{\mathcal{M}})$ and of course it satisfies the estimate \eqref{eq411}.

Now for the approximate solutions $\{\hat{p}^{(n)}\}_n$, we use the estimate \eqref{eq411} to infer that
\begin{align*}
\norm{\hat{p}^{(n)}}_{C^{k,\alpha}(\overline{\mathcal{M}})}\le& C\Big(\norm{f^{(n)}}_{C^{k-2,\alpha}(\overline{\mathcal{M}})}
+\norm{h_0^{(n)}}_{C^{k-2,\alpha}(\mathcal{M}_0)}\Big)\nonumber\\
\le&C\Big(\norm{f}_{C^{k-2,\alpha}(\overline{\mathcal{M}})}+\norm{h_0}_{C^{k-2,\alpha}(\mathcal{M}_0)}\Big).
\end{align*}
Hence by Ascoli--Arzela Lemma, there is a subsequence of $\{\hat{p}^{(n)}\}$ that converges to some $\hat{p}\in C^{k,\alpha}(\overline{\mathcal{M}})$ in the norm of $C^{k}(\overline{\mathcal{M}})$. Taking limit with respect to this subsequence in the boundary value problems of $\hat{p}^n$, we easily see that $\hat{p}$ is a classical solution to problem \eqref{eq401}. Therefore, we proved the following lemma.

\begin{lemma}\label{lem402}
Suppose that the S-Condition holds. Then problem \eqref{eq401} has one and only one solution in $C^{k,\alpha}(\overline{\mathcal{M}})$, and it satisfies the estimate \eqref{eq411}.
\end{lemma}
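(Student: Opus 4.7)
The strategy is to combine the a priori estimates \eqref{eq416} and \eqref{eq411} with the explicit approximate solutions built by Fourier separation. Uniqueness is almost immediate: if $\hat{p}_1,\hat{p}_2\in C^{k,\alpha}(\overline{\mathcal{M}})$ both solve \eqref{eq401}, then their difference solves the homogeneous problem \eqref{eq402}, and by the discussion culminating in \eqref{eq408} the S-Condition forces each Fourier coefficient $p_{i,m}$ to vanish; hence $\hat{p}_1=\hat{p}_2$. (One may alternatively cite \eqref{eq411} directly with zero data, noting that the estimate was derived under exactly the S-Condition via the standard compactness/contradiction argument that relies on uniqueness.)

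For existence, I will use a two-level approximation. First, by a lifting argument reduce to $h_1=0$, and pick sequences $\{f^{(n)}\}\subset C^\infty(\overline{\mathcal{M}})$, $\{h_0^{(n)}\}\subset C^\infty(\T^2)$ converging to $f,h_0$ in $C^{k-2,\alpha}$. Second, for fixed $n$ expand the data as in \eqref{eq417}--\eqref{eq418} and, for each mode $m\in\Z^2$ and each index $i\in\{1,2,3,4\}$, solve the third-order nonlocal two-point problem \eqref{eq419} for $\mathcal{P}_{i,m}$. Using the three fundamental Cauchy solutions $\mathcal{P}^1_{i,m},\mathcal{P}^2_{i,m},\mathcal{P}^\flat_{i,m}$ already introduced, \eqref{eq419} reduces to a $2\times2$ linear algebraic system in $(c_1,c_2)$; its homogeneous version is exactly \eqref{eq408}, which by the S-Condition has only the trivial solution, so Fredholm alternative gives a unique smooth solution $p_{i,m}=b(\mu,y^0)^{-1}\mathcal{P}_{i,m}'$. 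Summing the truncated series defines $\hat{p}_N\in C^\infty(\overline{\mathcal{M}})$ solving \eqref{eq423} with data $f^{(n)}_N,(h_0^{(n)})_N$.

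Two successive limits conclude the proof. Applying the Sobolev estimate \eqref{eq416} to the differences $\hat{p}_{N_2}-\hat{p}_{N_1}$, which solve \eqref{eq423} with data $f^{(n)}_{N_2}-f^{(n)}_{N_1}\to 0$ in $L^2(\mathcal{M})$ and $(h_0^{(n)})_{N_2}-(h_0^{(n)})_{N_1}\to 0$ in $L^2(\T^2)$, shows that $\{\hat{p}_N\}$ is Cauchy in $H^2(\mathcal{M})$ and $\{i^*\hat{p}_N\}$ in $H^2(\mathcal{M}_0)$. Passing $N\to\infty$ produces a strong solution $\hat{p}^{(n)}\in H^2(\mathcal{M})$ (with $i^*\hat{p}^{(n)}\in H^2(\mathcal{M}_0)$) to \eqref{eq401} with smooth data $(f^{(n)},h_0^{(n)},0)$; Lemma \ref{lem4001}-type bootstrapping (treating the nonlocal terms $e_4\int_{r_b}^{y^0}b\hat{p}^{(n)}\,\dd\tau$ and $e_5 i^*\hat{p}^{(n)}$ as known inhomogeneities controlled in $C^\alpha$ by Sobolev embedding) promotes $\hat{p}^{(n)}$ to $C^\infty(\overline{\mathcal{M}})$. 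Now \eqref{eq411} gives the uniform bound
\[
\norm{\hat{p}^{(n)}}_{C^{k,\alpha}(\overline{\mathcal{M}})}\le C\bigl(\norm{f}_{C^{k-2,\alpha}(\overline{\mathcal{M}})}+\norm{h_0}_{C^{k-2,\alpha}(\T^2)}+\norm{h_1}_{C^{k,\alpha}(\T^2)}\bigr),
\]
and Ascoli--Arzelà extracts a subsequence converging in $C^k(\overline{\mathcal{M}})$ to some $\hat{p}\in C^{k,\alpha}(\overline{\mathcal{M}})$, which solves \eqref{eq401} pointwise and inherits \eqref{eq411}.

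\textbf{Main obstacle.} The delicate point is the coupling of the Venttsel boundary condition on $\mathcal{M}_0$ with the interior integral terms in $\mathfrak{L}$, which together create a genuinely nonlocal ODE system \eqref{eq419} at each Fourier mode. Ensuring solvability of this system for every $m$ is precisely what the S-Condition is designed for, and it is also what underlies the validity of both a priori estimates \eqref{eq416} and \eqref{eq411}; without it the $2\times 2$ algebraic system in $(c_1,c_2)$ becomes singular at some modes and both uniqueness and existence fail. The other technical nuisance is that the nonlocal term $e_5(y^0)i^*\hat{p}$ couples interior and boundary regularity, so at every step one must treat $i^*\hat{p}$ in $H^2(\T^2)$ (resp.\ $C^{k,\alpha}(\T^2)$) on par with $\hat{p}$ in $H^2(\mathcal{M})$ (resp.\ $C^{k,\alpha}(\overline{\mathcal{M}})$), which is exactly the package delivered by estimates \eqref{eq416} and \eqref{eq411}.
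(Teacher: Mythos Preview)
Your proposal is correct and follows essentially the same route as the paper's own proof: reduce to $h_1=0$, approximate the data by smooth functions, solve the modewise two-point problems \eqref{eq419} via the fundamental Cauchy solutions and the S-Condition, pass $N\to\infty$ using the Sobolev estimate \eqref{eq416} to obtain $\hat{p}^{(n)}\in H^2$, bootstrap to $C^\infty$, then pass $n\to\infty$ using the H\"older estimate \eqref{eq411} and Ascoli--Arzel\`a. The only cosmetic difference is that the paper does not separately spell out the uniqueness argument (it is implicit in the derivation of \eqref{eq411}), whereas you do so explicitly.
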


\section{Stability of transonic shock solution}\label{sec5}
We now use a Banach fixed-point theorem to solve the transonic shock problem (T4), provided that the background solution $U_b$ satisfies the S-Condition.

\subsection{The iteration sets}\label{sec41}
Let $\sigma_0$  be a positive constant to be specified later, and
\begin{eqnarray*} \label{258}
\mathcal{K}_\sigma\triangleq\left\{\psi\in C^{4,\alpha}({\T}^2)\,:\,
\norm{\psi-r_b}_{C^{4,\alpha}({\T}^2)}\le
\sigma\le\sigma_0\right\}
\end{eqnarray*}
be the set of possible shock-front.  For any given $\psi\in \mathcal{K}_\sigma$, its position $r^p$ and
profile $\psi^p$ also satisfy
\begin{eqnarray*}
|{r^p-r_b}|\le\sigma,\qquad \norm{\psi^p}_{C^{4,\alpha}(\mathbf\T^2)}\le 2\sigma.
\end{eqnarray*}
We write the set of possible variations of the subsonic flows as
\begin{eqnarray*}
\mathcal{X}_\delta\triangleq\Big\{\check{U}=(\check{p}, \check{s},\check{E}, \check{u}')\,:\,
\norm{\check{U}}_{3}+\norm{i^*\check{U}}_{C^{3,\alpha}(\T^2)}\le \delta\le\delta_0\Big\}.
\end{eqnarray*}
The constants $\sigma_0, \delta_0$ will be chosen later. For $k=2,3$, the norm $\norm{\cdot}_k$ appeared here is defined by
\begin{eqnarray*}
\norm{\check{U}}_k\triangleq \norm{\check{p}}_{C^{k,\alpha}(\overline{\mathcal{M}})}+
\norm{\check{s}}_{C^{k-1,\alpha}(\overline{\mathcal{M}})}+
\norm{\check{E}}_{C^{k-1,\alpha}(\overline{\mathcal{M}})}+
\sum_{\beta=1}^2\norm{\check{u}^\beta}_{C^{k-1,\alpha}(\overline{\mathcal{M}})}.
\end{eqnarray*}
For any $\psi\in \mathcal{K}_\sigma$ and $\check{U}\in
\mathcal{X}_\delta$, we set $$U=\check{U}+U_b^+\left(\frac{L-\psi(y')}{L-r_b}(y^0-L)+L, y'\right).$$

\subsection{Construction of iteration mapping}\label{sec42}
Given $U^-$ satisfying \eqref{eq217}, for any $\psi\in \mathcal{K}_\sigma$ and $\check{U}\in
\mathcal{X}_\delta$, we construct a mapping
$$\mathcal{T}: \mathcal{K}_\sigma\times
\mathcal{X}_\delta\rightarrow \mathcal{K}_\sigma\times \mathcal{X}_\delta, \quad (\psi,\check{U})\mapsto (\hat{\psi},\hat{U})
$$
as follows. One should note that a fixed-point of this mapping is a solution to Problem (T4). We also use $C$ to denote generic positive constants which might be different in different places.

\medskip
\paragraph{\bf Pressure $p$.\ }
We first consider the problem \eqref{eq389} on $\hat{p}$:
\begin{eqnarray}
\begin{cases}\label{eq501}
\mathfrak{L}(\hat{p})=e_6(y^0)\hat{E}^-+F(U,\psi, DU,D^2p, D\psi, D^2\psi)& \text{in}\quad \mathcal{M},\\[3pt]
\hat{p}=p_1-p_b^+ &\text{on}\quad \mathcal{M}_1,\\[3pt]
\Delta'(i^*\hat{p})+\mu_7(i^*{\hat{p}})+\mu_8(i^*\p_0\hat{p})\\[3pt]
\qquad=\bar{g}_8(U,U^-,\psi,DU,D\psi,D^2U,D^2\psi,D^3\psi)&\text{on}\quad \mathcal{M}_0.
\end{cases}
\end{eqnarray}

Here the non-homogeneous terms $F$ and $\bar{g}_8$  are determined by $\psi\in \mathcal{K}_\sigma$ and $U=\check{U}+U_b^+$, with $\check{U}\in X_\delta$, and $\hat{E}^-$ is solved from \eqref{eq217}.
Then, since we assumed that the S-Condition holds, by Lemma \ref{lem402}, we could solve uniquely one $\hat{p}\in C^{3,\alpha}(\overline{\mathcal{M}})$ and it satisfies the following estimate:
\begin{eqnarray}\label{eq502}
\norm{\hat{p}}_{C^{3,\alpha}(\overline{\mathcal{M}})}\le C\Big(\norm{\hat{E}^-}_{C^{1,\alpha}(\overline{\mathcal{M}})}+\norm{F}_{C^{1,\alpha}(\overline{\mathcal{M}})}
+\norm{p_1-p_b^+}_{C^{3,\alpha}(\mathcal{M}_1)}+\norm{\bar{g}_8}_{C^{1,\alpha}(\T^2)}
\Big).
\end{eqnarray}
Checking the definitions of $F$ and $\bar{g}_8$,  we have
\begin{eqnarray}\label{eq503}
\norm{F}_{C^{1,\alpha}(\overline{\mathcal{M}})}\le C(\delta^2+\sigma^2+\varepsilon^2+\varepsilon), \qquad \norm{\bar{g}_8}_{C^{1,\alpha}(\T^2)}\le C(\delta^2+\varepsilon^2+\sigma^2+\varepsilon).
\end{eqnarray}
So combining \eqref{eq217}, \eqref{eq210} and \eqref{eq502}, \eqref{eq503}, one infers that
\begin{eqnarray}\label{eq504}
\norm{\hat{p}}_{C^{3,\alpha}(\overline{\mathcal{M}})}\le C\Big(\delta^2+\sigma^2+\varepsilon^2+\varepsilon\Big).
\end{eqnarray}

\medskip
\paragraph{\bf Update shock-front $\hat{\psi}$.\ }
With the specified higher-order terms $\bar{g}_5$ and $\bar{g}_7$, and $\hat{p}$ solved from \eqref{eq501}, we now set ({\it cf.} \eqref{eq366})
\begin{eqnarray}
&&\begin{cases}\label{eq505}\displaystyle
\hat{r}^p-r_b=-\frac{1}{4\pi^2\mu_6}\int_{{\T}^2}\Big(\mu_5\,
i^*(\p_0\hat{p})+\bar{g}_5(U,U^-,\psi,DU,D\psi)\Big)\, \dd x^1\dd x^2,\\[3pt]
\displaystyle\hat{\psi}^p=\frac{1}{\mu_2}\left(i^*(\hat{p})-{\mu_9}
\int_{{\T}^2}i^*(\p_0\hat{p})\,\dd x^1\dd x^2+{\bar{g}_9(U,U^-,\psi,D\psi)}\right),\\[3pt]
\hat{\psi}=\hat{\psi}^p+\hat{r}^p.
\end{cases}
\end{eqnarray}

It follows easily that (using \eqref{eq504})
\begin{align}\label{eq506}
\norm{\hat{\psi}^p}_{C^0(\T^2)}+|\hat{r}^p-r_b|\le& C\Big(\norm{\bar{g}_5}_{C^0(\T^2)}+\norm{\bar{g}_9}_{C^0(\T^2)}
+\norm{\hat{p}}_{C^{1}(\overline{\mathcal{M}})}\Big)\nonumber\\
\le&
C\Big(\delta^2+\sigma^2+\varepsilon^2+\varepsilon\Big).
\end{align}

For the $C^{4,\alpha}$ estimate of $\hat{\psi}^p$, we note that $i^*\hat{p}$ solves the third  equation in \eqref{eq501}, hence $\hat{\psi}^p$ solves the following elliptic equation on $\T^2$ ({\it cf.} \eqref{eq351}):
\begin{eqnarray}\label{eq507}
\Delta'\hat{\psi}^p+\mu_7\hat{\psi}^p=\mu_0\mu_6(\hat{r}^p-r_b)+\mu_0\mu_5\,i^*(\p_0\hat{p})
+\bar{g}_6(U,U^-,\psi,DU^-,DU,D\psi,D^2\psi).
\end{eqnarray}
Standard Schauder estimates \cite[Chapter 6]{GT2001} yield that
\begin{align}\label{eq508}
\norm{\hat{\psi}^p}_{C^{4,\alpha}(\T^2)}\le& C\Big(\norm{\hat{\psi}^p}_{C^0(\T^2)}
+|\hat{r}^p-r_b|+\norm{\hat{p}}_{C^{3,\alpha}(\T^2)}+\norm{\bar{g}_6}_{C^{2,\alpha}(\T^2)}
\Big)\nonumber\\
\le&C\Big(\delta^2+\sigma^2+\varepsilon^2+\varepsilon\Big).
\end{align}
Hence one has
\begin{eqnarray}\label{eq509}
\norm{\hat{\psi}-r_b}_{C^{4,\alpha}(\T^2)}\le C_0\Big(\delta^2+\sigma^2+\varepsilon^2+\varepsilon\Big).
\end{eqnarray}

We also need to show that
\begin{eqnarray}\label{eq510}
\int_{\T^2}\hat{\psi}^p\,\dd x^1\dd x^2=0.
\end{eqnarray}
In fact, integrating  \eqref{eq507} on $\T^2$, and recall $\bar{g}_6=\mu_0\bar{g}_5+\p_{\beta}\bar{g_0}^\beta$, using divergence theorem and definition of $\hat{r}^p-r_b$ in \eqref{eq505}, we  have directly \eqref{eq510}.

\medskip
\paragraph{\bf Entropy $A(s)$.\ }
Note that ${A(s_b^+)}$ is constant, we  solve the problem \eqref{eq367}
\begin{eqnarray}\label{eq511}
\begin{cases}
D_u\widehat{A(s)}=0&\text{in}\quad \mathcal{M},\\
i^*(\widehat{A(s)})=\mu_4\,(\hat{\psi}-r_b)+\bar{g}_4(U,U^-,\psi,D\psi)&\text{on}\quad \mathcal{M}_0
\end{cases}
\end{eqnarray}
to obtain the unique solution $\widehat{A(s)}$. It also holds
\begin{align}
\norm{\widehat{A(s)}}_{C^{2,\alpha}(\overline{\mathcal{M}})}\le& C\norm{i^*\widehat{A(s)}}_{C^{3,\alpha}(\T^2)}\nonumber\\
\le&C\Big(\norm{\hat{\psi}-r_b}_{C^{3,\alpha}(\T^2)}+\norm{\bar{g}_4}_{C^{3,\alpha}(\T^2)}\Big)\nonumber\\
\le&C\Big(\delta^2+\sigma^2+\varepsilon^2+\varepsilon\Big).\label{eq512}
\end{align}

\medskip

\paragraph{\bf Bernoulli constant $E$.\ }
We then solve the linear problem \eqref{eq364} on $\hat{E}$:
\begin{eqnarray}\label{eq513}
\begin{cases}
D_u \hat{E}+2\mu u^0 \hat{E}=\frac{2\mu u^0 }{\gamma-1}\rho_{b}^{\gamma-1}\widehat{A(s)}+\frac{2\mu u^0}{\rho_b} \hat{p}+\overline{H}(U, \psi, D\psi)&\text{in}\quad \mathcal{M},\\[3pt]
\hat{E}=E^--E_b^-&\text{on}\quad \mathcal{M}_0.
\end{cases}
\end{eqnarray}
Hence we could easily get the unique existence of $\hat{E}\in C^{2,\alpha}(\overline{\mathcal{M}})$ (note that $u\in C^{2,\alpha}(\overline{\mathcal{M}})$) with
\begin{align}\label{eq514}
\norm{\hat{E}}_{C^{2,\alpha}(\overline{\mathcal{M}})}\le&C\left(\norm{i^*(E^--E_b^-)}_{C^{3,\alpha}(\T^2)}
+\norm{\widehat{A(s)}}_{C^{2,\alpha}
		(\overline{\mathcal{M}})}+\norm{\hat{p}}_{C^{2,\alpha}
		(\overline{\mathcal{M}})}+\norm{\overline{H}}_{C^{2,\alpha}
		(\overline{\mathcal{M}})}\right)\nonumber\\
\le& C\Big(\delta^2+\sigma^2+\varepsilon^2+\varepsilon\Big).
\end{align}
The estimate \eqref{eq217} is used to obtain the second inequality.

\medskip
\paragraph{\bf Tangential velocity field $\bar{u}_0'$ on $\mathcal{M}_0$.}
Next we solve tangential velocity $\bar{u}'_0$ on $\mathcal{M}_0$ from ({\it cf.} \eqref{eq368})
\begin{eqnarray}\label{eq515}
\begin{cases}
\p_2({\bar{u}}^1_0)-\p_1({\bar{u}}^2_0)=-\frac{1}{\mu_0}\Big(\p_2\bar{g}_0^1(U, U^-,D\psi)-\p_1\bar{g}_0^2(U, U^-,D\psi)\Big)\\[3pt]
\p_\beta(\bar{u}^\beta_0)=\mu_5\
i^*(\p_0\hat{p})+\mu_6\,\hat{\psi}^p+\mu_6\,(\hat{r}^p-r_b)+\bar{g}_5(U,U^-,\psi,DU,D\psi)\\[3pt]
\int_{0}^{2\pi}\Big(\mu_0\bar{u}^1_0+\bar{g}_0^1\Big)(s,\pi)\,\dd s=0\\[3pt]
\int_{0}^{2\pi}\Big(\mu_0\bar{u}^2_0+\bar{g}_0^2\Big)(\pi,s)\,\dd s=0
\end{cases} \text{on}\, \T^2.
\end{eqnarray}
By applications of de Rham's Theorem and Hodge Theorem, or treated as in \cite[pp.546-547]{ChenYuan2008}, one can solve a unique  $\bar{u}_0'$ on $\T^2$, and the  following estimate is valid:
\begin{align}\label{eq516}
\norm{\bar{u}_0'}_{C^{3,\alpha}(\T^2)}\le& C\Big(\sum_{\beta=1}^2\norm{\bar{g}^\beta_0}_{C^{3,\alpha}(\T^2)}
+\norm{\hat{p}}_{C^{3,\alpha}(\overline{\mathcal{M}})}
+\norm{\hat{\psi}-r_b}_{C^{2,\alpha}(\T^2)}
+\norm{\bar{g}_5}_{C^{2,\alpha}(\T^2)}\Big)\nonumber\\
\le& C \Big(\delta^2+\sigma^2+\varepsilon^2+\varepsilon\Big).
\end{align}

\medskip

\paragraph{\bf Tangential velocity $\hat{u}'$ in $\mathcal{M}$.}
Finally, we solve the tangential velocity $\hat{u}'$ in $\mathcal{M}$ through ({\it cf.} \eqref{eq369})
\begin{eqnarray}
\begin{cases}\label{eq517}
D_u\hat{u}^\beta=-\frac{1}{\rho}\p_\beta \hat{p}+\overline{W}_{\beta}(U,\psi,Dp,D\psi) &\text{in}\quad \mathcal{M},\\[3pt]
\hat{u}^\beta=\hat{u}_0^\beta&\text{on}\quad \mathcal{M}_0.
\end{cases}
\end{eqnarray}
Here the Cauchy data $\hat{u}_0^\beta$ ($\beta=1,2$) on $\mathcal{M}_0$ is solved from \eqref{eq515}.

From \eqref{eq504} and \eqref{eq516}, we obtain a unique $\hat{u}^\beta$ in $\mathcal{M}$  and it holds that
\begin{align}\label{eq518}
\norm{\hat{u}^\beta}_{C^{2,\alpha}(\overline{\mathcal{M}})}\le& C\Big(\norm{\hat{u}_0^\beta}_{C^{2,\alpha}(\mathcal{M}_0)}
+\norm{\hat{p}}_{C^{3,\alpha}(\overline{\mathcal{M}})}
+\norm{\overline{W}_{\beta}}_{C^{2,\alpha}(\overline{\mathcal{M}})}\Big)\nonumber\\
\le&C\Big(\delta^2+\sigma^2+\varepsilon^2+\varepsilon\Big).
\end{align}

\medskip
\paragraph{\bf Conclusion.\ } From the above six steps, we get uniquely one pair $(\hat{U}, \hat{\psi})$ and it follows from \eqref{eq504}, \eqref{eq509}, \eqref{eq512}, \eqref{eq514}, \eqref{eq516},
and \eqref{eq518} that
\begin{eqnarray}\label{eq519}
\norm{\hat{\psi}-r_b}_{C^{4,\alpha}(\T^2)}+\norm{\hat{U}}_3+\norm{i^*\hat{U}}_{C^{3,\alpha}(\T^2)}\le \tilde{C}\Big(\delta^2+\sigma^2+\varepsilon^2+\varepsilon\Big).
\end{eqnarray}
Here $\tilde{C}$ is a constant depending only on the background solution and $L, \alpha$.
Now we choose $C_*=4\tilde{C}$ and $\varepsilon_0\le \min\Big\{ {1}/{(16\tilde{C}^2)}, 1, h_b/(8\tilde{C})\Big\}$. Then, for $\delta=\sigma=C_*\varepsilon$, we have $\tilde{C}\Big(\delta^2+\sigma^2+\varepsilon^2+\varepsilon\Big)\le\delta,\,
\forall \varepsilon\in(0,\varepsilon_0),$ and the  estimate \eqref{eq519} shows that $\hat{\psi}\in \mathcal{K}_{C_*\varepsilon}$ and $\hat{U}\in \mathcal{X}_{C_*\varepsilon}$. Hence we construct the desired mapping $\mathcal{T}$ on $\mathcal{K}_{C_*\varepsilon}\times \mathcal{X}_{C_*\varepsilon}$.

\subsection{Contraction of iteration mapping}\label{sec43}
What left is to show that the mapping $$\mathcal{T}: \mathcal{K}_{C_*\varepsilon}\times
\mathcal{X}_{C_*\varepsilon}\rightarrow \mathcal{K}_{C_*\varepsilon}\times \mathcal{X}_{C_*\varepsilon}, \quad (\psi,\check{U})\mapsto (\hat{\psi},\hat{U})
$$
is a contraction in the sense that
\begin{align}\label{eq520}
&\norm{\hat{\psi}^{(1)}-\hat{\psi}^{(2)}}_{C^{3,\alpha}(\T^2)}+\norm{\hat{U}^{(1)}
-\hat{U}^{(2)}}_2+\norm{i^*(\hat{U}^{(1)}
-\hat{U}^{(2)})}_{C^{2,\alpha}(\T^2)}\nonumber\\
\le&\frac12\Big(\norm{{\psi}^{(1)}-{\psi}^{(2)}}_{C^{3,\alpha}(\T^2)}+\norm{\check{U}^{(1)}
-\check{U}^{(2)}}_2+\norm{i^*(\check{U}^{(1)}
-\check{U}^{(2)})}_{C^{2,\alpha}(\T^2)}\Big)\nonumber\\
\triangleq&\frac12\mathcal{Q},
\end{align}
provided that $\varepsilon_0$ is further small (depending only on the background solution and $L, \alpha$). Here for $j=1,2$, and any $\psi^{(j)}\in \mathcal{K}_{{C_*\varepsilon}}, \check{U}^{(j)}\in \mathcal{X}_{{C_*\varepsilon}}$, we have defined $(\hat{\psi}^{(j)}, \hat{U}^{(j)})=\mathcal{T}(\psi^{(j)}, \check{U}^{(j)})$.

To prove \eqref{eq520}, we set $\widetilde{\psi}=\hat{\psi}^{(1)}-\hat{\psi}^{(2)}$, and $\widetilde{U}=\hat{U}^{(1)}-\hat{U}^{(2)}$.   For $k=1,2$, we also use the notations
\begin{align*}
(U^-)^{(k)}=&\left.U^-\right|_{S^{\psi^{(k)}}},\quad (\hat{U}^-)^{(k)}= (U^--U_b^-)^{(k)},\quad
i^*(U_b^+)^{(k)}=\left.U_b^+\right|_{S^{\psi^{(k)}}},\\ (U_b^+)^{(k)}=&(U_b^+)\Big(\frac{L-\psi^{(k)}(y')}{L-r_b}(y^0-L)+L, y'\Big),
\quad U^{(k)}=\check{U}^{(k)}+(U_b^+)^{(k)}.
\end{align*}
By \eqref{eq517} and analyticity of $U_b^\pm$, the mean value theorem implies that
\begin{eqnarray}
&&\norm{(\hat{U}^-)^{(1)}-(\hat{U}^-)^{(2)}}_{C^{2,\alpha}(\T^2)}\le C\varepsilon\norm{\psi^{(1)}-\psi^{(2)}}_{C^{2,\alpha}(\T^2)},\label{eq521}\\
&&\norm{i^*(U_b^+)^{(1)}-i^*(U_b^+)^{(2)}}_{C^{k,\alpha}(\T^2)}\le C\norm{\psi^{(1)}-\psi^{(2)}}_{C^{k,\alpha}(\T^2)},\label{eq522}\\
&&\norm{(U_b^+)^{(1)}-(U_b^+)^{(2)}}_{C^{k,\alpha}{(\overline{\mathcal{M}}})}\le C\norm{\psi^{(1)}-\psi^{(2)}}_{C^{k,\alpha}(\T^2)},\quad k=1,2,3,4.\label{eq523}
\end{eqnarray}

\noindent {\it Step 1.} Firstly we seek an estimate of $\widetilde{p}$, which solves  ({\it cf.} \eqref{eq501})
\begin{eqnarray*}
\begin{cases}
\mathfrak{L}(\widetilde{p})=e_6(y^0)\Big((\hat{E}^-)^{(1)}-(\hat{E}^-)^{(2)}\Big)+F^{(1)}-F^{(2)}& \text{in}\quad \mathcal{M},\\[3pt]
\widetilde{p}=0 &\text{on}\quad \mathcal{M}_1,\\[3pt]
\Delta'(i^*\widetilde{p})+\mu_7(i^*\widetilde{p})+\mu_8(i^*\p_0\widetilde{p})
=\bar{g}_8^{(1)}-\bar{g}_8^{(2)}&\text{on}\quad \mathcal{M}_0.
\end{cases}
\end{eqnarray*}
Here for $k=1,2$,
\begin{eqnarray*}
&&F^{(k)}=f(U^{(k)},\psi^{(k)}, DU^{(k)},D^2p^{(k)}, D\psi^{(k)}, D^2\psi^{(k)}),\\
&&\bar{g}_8^{(k)}=
\bar{g}_8(U^{(k)},(U^-)^{(k)},\psi^{(k)},DU^{(k)},D\psi^{(k)},D^2U^{(k)},D^2\psi^{(k)},D^3\psi^{(k)}).
\end{eqnarray*}
By Lemma \ref{lem401} and \eqref{eq521}, direct computation yields
\begin{align}\label{eq524}
\norm{\widetilde{p}}_{C^{2,\alpha}(\overline{\mathcal{M}})}\le& C\left(\norm{(\hat{E}^-)^{(1)}-(\hat{E}^-)^{(2)}}_{C^{\alpha}(\overline{\mathcal{M}})}
+\norm{F^{(1)}-F^{(2)}}_{C^{\alpha}(\overline{\mathcal{M}})}
+\norm{\bar{g}_8^{(1)}-\bar{g}_8^{(2)}}_{C^\alpha(\T^2)}\right)\nonumber\\
\le& C\varepsilon \mathcal{Q}.
\end{align}

\noindent {\it Step 2.} From \eqref{eq505}, we see that
\begin{eqnarray*}
&&\begin{cases}\displaystyle
\tilde{r}^p=-\frac{1}{4\pi^2\mu_6}\int_{\T^2}\Big(\mu_5\,
i^*(\p_0\widetilde{p})+(\bar{g}_5^{(1)}-\bar{g}_5^{(2)})\Big)\, \dd x^1\dd x^2,\\[3pt]
\widetilde{\psi}^p=\frac{1}{\mu_2}\left(i^*\widetilde{p}-{\mu_9}
\int_{\T^2}i^*(\p_0\widetilde{p})\,\dd x^1\dd x^2+(\bar{g}_9^{(1)}-\bar{g}_9^{(2)})\right),\\[3pt]
\widetilde{\psi}=\widetilde{\psi}^p+\tilde{r}^p.
\end{cases}
\end{eqnarray*}
Here, for $k=1,2$,
\begin{eqnarray*}
&&\bar{g}_5^{(k)}=\bar{g}_5(U^{(k)},(U^-)^{(k)},\psi^{(k)},DU^{(k)},D\psi^{(k)}),\\
&&\bar{g}_7^{(k)}=\bar{g}_7(U^{(k)},(U^-)^{(k)},\psi^{(k)},D\psi^{(k)}).
\end{eqnarray*}
Then we have the following estimate via \eqref{eq524}, and some straightforward computations:
\begin{align}\label{eq525}
\norm{\widetilde{\psi}^p}_{C(\T^2)}+|\tilde{r}^p|\le& C\left(\norm{\widetilde{p}}_{C^1(\overline{\mathcal{M}})}+\norm{\bar{g}_5^{(1)}-\bar{g}_5^{(2)}}_{C(\overline{\mathcal{M}})}+
\norm{\bar{g}_7^{(1)}-\bar{g}_7^{(2)}}_{C(\overline{\mathcal{M}})}\right)\nonumber\\
\le&C\varepsilon \mathcal{Q}.
\end{align}
By \eqref{eq507}, note that $\widetilde{\psi}^p$ also solves
\begin{eqnarray*}
\Delta'\widetilde{\psi}^p+\mu_7\widetilde{\psi}^p=\mu_0\mu_6\tilde{r}^p+\mu_0\mu_5\,i^*\p_0\widetilde{p}
+\bar{g}_6^{(1)}-\bar{g}_6^{(2)},
\end{eqnarray*}
with
\begin{eqnarray*}
\bar{g}_6^{(k)}=\bar{g}_6(U^{(k)},(U^-)^{(k)},\psi^{(k)},(DU^-)^{(k)},DU^{(k)},D\psi^{(k)},D^2\psi^{(k)}),\quad k=1,2,
\end{eqnarray*}
it follows that, from \eqref{eq524} and \eqref{eq525},
\begin{align*}
\norm{\widetilde{\psi}^p}_{C^{3,\alpha}(\T^2)}\le& C\left(\norm{\widetilde{\psi}^p}_{C(\T^2)}+|\tilde{r}^p|+
\norm{\widetilde{p}}_{C^{2,\alpha}(\overline{\mathcal{M}})}
+\norm{\bar{g}_6^{(1)}-\bar{g}_6^{(2)}}_{C^{1,\alpha}(\T^2)}\right)\nonumber\\
\le&C\varepsilon \mathcal{Q}.
\end{align*}
This and \eqref{eq525} imply that
\begin{eqnarray}\label{eq526}
\norm{\widetilde{\psi}}_{C^{3,\alpha}(\T^2)}\le C\varepsilon \mathcal{Q}.
\end{eqnarray}

\noindent {\it Step 3.} From \eqref{eq511}, one has
\begin{eqnarray*}
\begin{cases}
D_{u^{(1)}}\widetilde{A(s)}+D_{u^{(1)}-u^{(2)}}\widehat{A(s)}^{(2)}=0&\text{in}\quad \mathcal{M},\\[3pt]
i^*(\widetilde{A(s)})=\mu_4\,\widetilde{\psi}+\bar{g}_4^{(1)}-\bar{g}_4^{(2)}&\text{on}\quad \mathcal{M}_0,
\end{cases}
\end{eqnarray*}
where
\begin{eqnarray*}
\bar{g}^{(k)}_4=\bar{g}_4(U^{(k)},(U^-)^{(k)},\psi^{(k)},D\psi^{(k)}),\quad k=1, 2.
\end{eqnarray*}
By \eqref{eq523} and \eqref{eq526}, we have
\begin{align}\label{eq527}
\norm{\widetilde{A(s)}}_{C^{1,\alpha}(\overline{\mathcal{M}})}\le& C\left( \norm{i^*(\widetilde{A(s)})}_{C^{2,\alpha}(\T^2)}+\norm{u^{(1)}-u^{(2)}}_{C^{1,\alpha}(\overline{\mathcal{M}})}
\norm{\widehat{A(s)}^{(2)}}_{C^{2,\alpha}(\overline{\mathcal{M}})}\right)\nonumber\\
\le&C\left\{\norm{\widetilde{\psi}}_{C^{2,\alpha}(\T^2)}
+\norm{\bar{g}_4^{(1)}-\bar{g}_4^{(2)}}_{C^{2,\alpha}(\T^2)}\right.\nonumber\\
&\left.
+\, \varepsilon\left(\norm{\check{U}^{(1)}-\check{U}^{(2)}}_{C^{1,\alpha}(\overline{\mathcal{M}})}
+\norm{{U_b^+}^{(1)}-{U_b^+}^{(2)}}_{C^{1,\alpha}(\overline{\mathcal{M}})}\right)\right\}\nonumber\\
\le& C\varepsilon \mathcal{Q}.
\end{align}

\noindent{\it Step 4.} We note that $\widetilde{E}$ solves the following problem ({\it cf.} \eqref{eq513})
\begin{eqnarray*}
\begin{cases}
\frac{1}{(u^0)^{(1)}}D_{u^{(1)}}\widetilde{E}+2\mu \widetilde{E}+\left(\frac{1}{(u^0)^{(1)}}D_{u^{(1)}}-\frac{1}{(u^0)^{(2)}}D_{u^{(2)}}\right)\hat{E}^{(2)}\\[3pt]
\qquad=\frac{2\mu }{\gamma-1}\rho_{b}^{\gamma-1}\widetilde{A(s)}+\frac{2\mu }{\rho_b} \widetilde{p}+\frac{1}{(u^0)^{(1)}}\overline{H}^{(1)}-\frac{1}{(u^0)^{(2)}}\overline{H}^{(2)}&\text{in}\ \ \mathcal{M},\\[3pt]
\widetilde{E}=(\hat{E}^-)^{(1)}-(\hat{E}^-)^{(2)} &\text{on}\ \ \mathcal{M}_0.
\end{cases}
\end{eqnarray*}
with
\begin{eqnarray*}
\bar{H}^{(k)}=\bar{H}(U^{(k)},\psi^{(k)},D\psi^{(k)}),\quad k=1,2.
\end{eqnarray*}
Then using  \eqref{eq521}, \eqref{eq523}, \eqref{eq524} and \eqref{eq527}, one has
\begin{align}\label{eq528}
\norm{\widetilde{E}}_{C^{1,\alpha}(\overline{\mathcal{M}})}\le&
C\left(\norm{(\hat{E}^-)^{(1)}-(\hat{E}^-)^{(2)}}_{C^{1,\alpha}(\T^2)}
+\norm{u^{(1)}-u^{(2)}}_{C^{1,\alpha}(\overline{\mathcal{M}})}
\norm{\hat{E}^{(2)}}_{C^{2,\alpha}(\overline{\mathcal{M}})}
\right.\nonumber\\
&\quad\left. +\norm{\widetilde{A(s)}}_{C^{1,\alpha}(\overline{\mathcal{M}})}
+\norm{\widetilde{p}}_{C^{1,\alpha}(\overline{\mathcal{M}})}
+\norm{\frac{1}{(u^0)^{(1)}}\bar{H}^{(1)}-\frac{1}{(u^0)^{(2)}}\bar{H}^{(2)}}_{C^{1,\alpha}
(\overline{\mathcal{M}})}\right)\nonumber\\
\le& C\varepsilon\mathcal{Q}.
\end{align}

\noindent {\it Step 5.} Next by \eqref{eq515} we find that the difference of tangential velocity field on $\mathcal{M}_0$ solves
\begin{eqnarray*}
\begin{cases}
\p_2({\tilde{u}}^1_0)-\p_1({\tilde{u}}^2_0)=-\frac{1}{\mu_0}\Big(\p_2\big((\bar{g}_0^{1})^{(1)}
-(\bar{g}_0^{1})^{(2)}\big)
-\p_1\big((\bar{g}_0^{2})^{(1)}-(\bar{g}_0^{2})^{(2)}\big)\Big),\\[3pt]
\p_\beta(\tilde{u}^\beta_0)=\mu_5\,
i^*(\p_0\widetilde{p})+\mu_6\,
\widetilde{\psi}+(\bar{g}_5^{(1)}-\bar{g}_5^{(2)}),\\[3pt]
\int_{0}^{2\pi}\Big(\mu_0\tilde{u}^1_0+\big((\bar{g}_0^{1})^{(1)}
-(\bar{g}_0^{1})^{(2)}\big)\Big)(s,x^2)ds=0,\\[3pt]
\int_{0}^{2\pi}\Big(\mu_0\tilde{u}^2_0+\big((\bar{g}_0^{2})^{(1)}-(\bar{g}_0^{2})^{(2)}\big)\Big)(x^1,s)ds=0,
\end{cases}\qquad \text{on}\  \T^2;
\end{eqnarray*}
where, for $k,\beta=1,2$,
\begin{align*}
(\bar{g}_0^{\beta})^{(k)}=&\bar{g}_0^{\beta}(U^{(k)},(U^-)^{(k)}, \psi^{(k)}, D\psi^{(k)}),\\
\bar{g}_5^{(k)}=&\bar{g}_5(U^{(k)},(U^-)^{(k)},\psi^{(k)},DU^{(k)},D\psi^{(k)}).
\end{align*}
We easily deduce the estimate
\begin{align}\label{eq529}
\norm{{\tilde{u}}_0'}_{C^{2,\alpha}(\T^2)}\le& C\left(\sum_{\beta=1}^2\norm{(\bar{g}_0^{\beta})^{(1)}-(\bar{g}_0^{\beta})^{(2)}}_{C^{2,\alpha}(\T^2)}+\norm{\widetilde{p}}_{C^{2,\alpha}(\overline{\mathcal{M}})}
+\norm{\widetilde{\psi}}_{C^{1,\alpha}(\T^2)}\right.\nonumber\\
&\left.\quad+\norm{\bar{g}_5^{(1)}-\bar{g}_5^{(2)}}_{C^{1,\alpha}(\T^2)}\right)\nonumber\\
\le&C\varepsilon \mathcal{Q}.
\end{align}

\noindent {\it Step 6.} From \eqref{eq517}, $\tilde{u}^\beta$, ($\beta=1,2$) solves
\begin{eqnarray*}
\begin{cases}
(u^j)^{(1)}\p_j\tilde{u}^\beta+\Big((u^j)^{(1)}
-(u^j)^{(2)}\Big)\p_j(\hat{u}^{(\beta)})\\[3pt]
\qquad=-\frac{1}{\rho^{(1)}}\p_\beta \tilde{p}+\left(\frac{1}{\rho^{(2)}}-\frac{1}{\rho^{(1)}}\right)\p_\beta \hat{p}^{(2)} +(\overline{W}_\beta^{(1)}-\overline{W}_\beta^{(2)})&\text{in}\quad \mathcal{M},\\[3pt]
\tilde{u}^\beta=\tilde{u}_0^\beta&\text{on}\quad \mathcal{M}_0,
\end{cases}
\end{eqnarray*}
with
\begin{eqnarray*}
\overline{W}_\beta^{(k)}=\overline{W}_\beta(U^{(k)}, \psi^{(k)}, Dp^{(k)}, D\psi^{(k)}),\quad k=1,2.
\end{eqnarray*}
So there holds
\begin{align}\label{eq530}
\norm{\tilde{u}^\beta}_{C^{1,\alpha}(\overline{\mathcal{M}})}\le& C\left(\norm{{\tilde{u}}_0^\beta}_{C^{1,\alpha}(\T^2)}+\norm{\tilde{p}}_{C^{2,\alpha}(\overline{\mathcal{M}})}
+\varepsilon\norm{U^{(1)}-U^{(2)}}_{C^{1,\alpha}(\overline{\mathcal{M}})}\right.\nonumber\\
&\left.\quad+\norm{\overline{W}_\beta^{(1)}-\overline{W}_\beta^{(2)}}_{C^{1,\alpha}(\overline{\mathcal{M}})}\right)\nonumber\\
\le&C\varepsilon \mathcal{Q}.
\end{align}

\noindent {\it Conclusion.} Now summing up the inequalities \eqref{eq524}--\eqref{eq530}, we get
\begin{eqnarray*}
\norm{\hat{\psi}}_{C^{3,\alpha}(\T^2)}+\norm{\hat{U}}_2
+\norm{i^*\hat{U}}_{C^{2,\alpha}(\T^2)}
\le C'\varepsilon \mathcal{Q},
\end{eqnarray*}
which implies \eqref{eq520} if $\varepsilon\in(0, \varepsilon_0)$ and  $C'\varepsilon_0<1/2$.
Finally, by a Banach fixed-point theorem, we infer Problem (T4), hence Problem (T), has one and only one solution in $\mathcal{K}_{C_*\varepsilon}\times \mathcal{X}_{C_*\varepsilon}.$ This finishes the proof of Theorem  \ref{thm201}.

\bigskip
{\bf Acknowledgments.}
The authors thank sincerely an anonymous reader for pointing out a serious mistake on Lemma \ref{lem401} in a previous version of this manuscript.  We reformulated Lemma \ref{lem401} and presented a detailed proof in this new version.  This work is supported  by National Nature Science Foundation of China under
Grant No. 11371141 and No. 11871218; by Science and Technology Commission of Shanghai Municipality (STCSM) under Grant No. 18dz2271000.



\begin{thebibliography}{99}
\bibitem{AN-2000}
D. E. Apushkinskaya;  A. I. Nazarov.
A survey of results on nonlinear Venttsel problems.
Appl. Math.  45  (2000),  no. 1, 69--80.


\bibitem{BaeFeldman2011}
M. Bae; M. Feldman. Transonic shocks in multidimensional divergent nozzles. Arch. Ration. Mech. Anal.  201  (2011),  no. 3, 777--840.


\bibitem{BS2007}
S. Benzoni-Gavage; D. Serre. { Multidimensional Hyperbolic
Partial Differential Equations: First-order Systems and
Applications.} Oxford Mathematical Monographs. Clarendon Press,
Oxford, 2007.

\bibitem{ChenXie2014}
C. Chen; C. Xie. Three dimensional steady subsonic Euler flows in bounded nozzles. J. Differential Equations  256  (2014),  no. 11, 3684--3708.

\bibitem{chen-feldman-2003}
G.-Q. Chen; M. Feldman. Multidimensional transonic shocks and free boundary problems for nonlinear equations of mixed type. J. Amer. Math. Soc.  16  (2003),  no. 3, 461--494.


\bibitem{ChenYuan2013}
G.-Q. Chen; H. Yuan. Local uniqueness of steady spherical transonic shock-fronts for the three-dimensional full Euler equations. Commun. Pure Appl. Anal. 12 (2013), no. 6, 2515--2542.


\bibitem{ChenYuan2008}
S. Chen; H. Yuan. Transonic shocks in compressible flow passing a duct for three-dimensional Euler systems. Arch. Ration. Mech. Anal.  187  (2008),  no. 3, 523--556.

\bibitem{CHHQ2016}
S.-W. Chou; J. M. Hong; B.-C. Huang; R. Quita. Global transonic solutions to combined Fanno Rayleigh flows through variable nozzles. arXiv:1611.10083, 2016.

\bibitem{CF1976}
R. Courant; K. O. Friedrichs. Supersonic flow and shock waves. Applied Mathematical Sciences, Vol. 21. Springer-Verlag, New York-Heidelberg, 1976.


\bibitem{Dafermos2010}
C.~M. Dafermos. { Hyperbolic Conservation Laws in Continuum Physics}. Third edition. Grundlehren der mathematischen Wissenschaften, vol. 325. Springer-Verlag, Berlin Heidelberg, 2010.


\bibitem{FangLiuYuan2013}
B. Fang; L. Liu; H. Yuan. Global uniqueness of transonic shocks in two-dimensional steady compressible Euler flows. Arch. Ration. Mech. Anal.  207  (2013),  no. 1, 317--345.



\bibitem{GT2001}
D. Gilbarg; N. S. Trudinger. Elliptic partial differential equations of second order. Reprint of the 1998 edition. Classics in Mathematics. Springer-Verlag, Berlin, 2001.

\bibitem{HMP2005}
F. Huang; P. Marcati; R. Pan. Convergence to the Barenblatt solution for the compressible Euler equations with damping and vacuum. Arch. Ration. Mech. Anal.  176  (2005),  no. 1, 1--24.

\bibitem{HuangPanWang2011}
F. Huang; R. Pan; Z. Wang.  $L^1$ convergence to the Barenblatt solution for compressible Euler equations with damping. Arch. Ration. Mech. Anal. 200 (2011), no. 2, 665--689.


\bibitem{LiXinYin2013}
J. Li; Z. Xin; H. Yin. Transonic shocks for the full compressible Euler system in a general two-dimensional de Laval nozzle. Arch. Ration. Mech. Anal.  207  (2013),  no. 2, 533--581.


\bibitem{LiuYuan2008}
L. Liu; H. Yuan. Stability of cylindrical transonic shocks for the two-dimensional steady compressible Euler system. J. Hyperbolic Differ. Equ.  5  (2008),  no. 2, 347--379.


\bibitem{LiuXuYuan2016}
L. Liu; G. Xu; H. Yuan. Stability of spherically symmetric subsonic flows and
transonic shocks under multidimensional perturbations. Adv. Math. 291 (2016), 696--757.

\bibitem{Liu1982-1}
T. P. Liu.
Transonic gas flow in a duct of varying area.
Arch. Rational Mech. Anal.  80  (1982), no. 1, 1--18.

\bibitem{Liu1982-2}
T. P. Liu.   Nonlinear stability and instability of transonic flows through a nozzle. Comm. Math. Phys.  83  (1982), no. 2, 243--260.

\bibitem{LN1991}
Y. Luo; N. S. Trudinger. Linear second order elliptic equations with Venttsel boundary conditions. Proc. Roy. Soc. Edinburgh Sect. A  118  (1991),  no. 3-4, 193--207.

\bibitem{Ra2010}
E. Rathakrishnan. Applied gas dynamics. John Wiley \& Sons (Asia) Pte Ltd, 2010.

\bibitem{RauchXieXin2013}
J. Rauch; C. Xie; Z. Xin. Global stability of steady transonic Euler shocks in quasi-one-dimensional nozzles. J. Math. Pures Appl. (9)  99  (2013),  no. 4, 395--408.

\bibitem{Shapiro1953}
A. H. Shapiro. The dynamics and thermodynamics of compressible fluid flow, Vol. 1, Ronald Press Co., New York, 1953.


\bibitem{Tsuge2015}
N. Tsuge. Existence of global solutions for isentropic gas flow in a divergent nozzle with friction. J. Math. Anal. Appl. 426 (2015), no. 2, 971--977.

\bibitem{vandyke}
M. Van Dyke. An album of fluid motion. The Parabolic Press, Stanford, California, 1982.

\bibitem{Venttsel1959}
A. D. Venttsel. On boundary conditions for multi-dimensional diffusion processes. Theor. Probability Appl. 4 (1959), 164--177.

\bibitem{Walter1998}
W. Walter. Ordinary differential equations.  Graduate Texts in Mathematics, 182. Readings in Mathematics. Springer-Verlag, New York, 1998.

\bibitem{Weng2015}
S. Weng. A new formulation for the 3-D Euler equations with an application to subsonic flows in a cylinder. Indiana Univ. Math. J. 64  (2015), no. 6, 1609--1642.

\bibitem{XinYin2005}
Z. Xin; H. Yin. Transonic shock in a nozzle. I. Two-dimensional case. Comm. Pure Appl. Math.  58  (2005),  no. 8, 999--1050.

\bibitem{Yuan2006}
H. Yuan. On transonic shocks in two-dimensional variable-area ducts for steady Euler system. SIAM J. Math. Anal. 38 (2006), no. 4, 1343--1370.

\bibitem{Yuan2007}
H. Yuan. Transonic shocks for steady Euler flows with cylindrical symmetry. Nonlinear Anal.  66  (2007),  no. 8, 1853--1878.


\bibitem{Yuan2008-2}
H. Yuan. A remark on determination of transonic shocks in divergent nozzles for steady compressible Euler flows. Nonlinear Anal. Real World Appl.  9  (2008),  no. 2, 316--325.


\bibitem{YuanZhao}
H. Yuan; Q. Zhao. Subsonic flow passing a duct for three-dimensional steady compressible Euler system with friction. arXiv:1711.11431v1. Nov. 28, 2017.
\end{thebibliography}
\end{document}